\documentclass{amsart}
\usepackage[utf8]{inputenc}

\usepackage{enumerate}
\usepackage{enumitem}
\usepackage{multicol}
\usepackage{amssymb,amsmath}
\usepackage{amsthm}
\usepackage{turnstile}
\usepackage{textcomp}
\usepackage{graphicx}
\usepackage{subcaption}
\usepackage{mathtools}
\usepackage{wrapfig}
\usepackage{hhline}
\usepackage{array,multirow}
\usepackage{xcolor}
\usepackage{float}
\usepackage{geometry}
\usepackage{tikz}

\usepackage{hyperref}
\usepackage[normalem]{ulem}

\usetikzlibrary{positioning,calc}

\theoremstyle{plain}
\newtheorem{theorem}{Theorem}[section]

\newtheorem{definition}[theorem]{Definition}

\newtheorem{fact}[theorem]{Fact}

\DeclareMathOperator{\prcat}{\mathtt{PRCatSpec}}
\DeclareMathOperator{\cone}{\mathtt{Cone}}

\title{Primitive recursive categoricity spectra of functional structures}

\author{Nikolay Bazhenov}
\address{Novosibirsk State University, Novosibirsk, Russia}
\address{Nazarbayev University, Astana, Kazakhstan}
\email{nickbazh@yandex.ru}

\author{Heer Tern Koh}
\address{Nanyang Technological University, Singapore}
\email{heertern001@e.ntu.edu.sg}

\author{Keng Meng Ng}
\address{Nanyang Technological University, Singapore}
\email{kmng@ntu.edu.sg}

\thanks{Ng was supported by the Ministry of Education, Singapore, under its Academic Research Fund Tier 2 (MOE-T2EP20222-0018) and Academic Research Fund Tier 1 (RG104/24). We also thank David Belanger for the many helpful discussions.}

\date{\today}

\begin{document}

\begin{abstract}
For the notion of degree of categoricity, we study an analogous notion for punctual structures. We show that such notions coincide for non-$\Delta_{1}^{0}$-categorical injection structures, and construct an example of a $\Delta_{1}^{0}$-categorical injection structure for which these notions differ. Additionally, we also show that in every non-zero c.e.~Turing degree, there exists a PR-degree that is low for punctual isomorphism (to be defined), and also a PR-degree that is a degree of punctual categoricity.
\end{abstract}

\maketitle

\section{Introduction}

One of the key research programs in computable structure theory is the investigation of the algorithmic content of isomorphisms between computable structures (first defined in \cite{rab60,mal61}). More specifically, given some computable structure $\mathcal{A}$, researchers attempt to find a Turing degree $\mathbf{d}$ so that for any two computable presentations $B,C\cong\mathcal{A}$, $\mathbf{d}$ computes an isomorphism between $B$ and $C$. For such a degree $\mathbf{d}$ and structure $\mathcal{A}$, we say that $\mathcal{A}$ is $\mathbf{d}$-\emph{computably categorical}. Intuitively, this is a measure of the amount of non-computable information required in order to algorithmically produce isomorphisms between copies of $\mathcal{A}$. Under such a setting, the algorithmic content of isomorphisms on various classes of structures have been studied (see for example \cite{gd80,remmel81b,remmel81,mccoy03,cchm06,cchm09,chr14,fro15,baz17}).

Conversely, given a Turing degree $\mathbf{d}$, one can also ask if such a degree is able to compute isomorphisms between certain structures. To make such a question meaningful, one should really consider only the structures $\mathcal{A}$ for which $\mathbf{d}$ is the least degree that is capable of producing isomorphisms of $\mathcal{A}$. Otherwise, the answer is always trivially yes, as there exist structures that are \emph{computably categorical}; structures which have computable isomorphisms between any two of their presentations. In the event that a structure $\mathcal{A}$ exists for which $\mathbf{d}$ is the least degree that computes an isomorphism between any two given presentations of $\mathcal{A}$, then we say that $\mathbf{d}$ is a \emph{degree of categoricity} \cite{fkm10,gon11}. We refer to, e.g., \cite{fkm10,CFS-13,BKY-18,CS-19,CDH-20,Baz-21,CN-22,CR-24} for known results on degrees of categoricity. On the other hand, if $\mathbf{d}$ is such that any structure that is $\mathbf{d}$-computably categorical is also computably categorical, then we say that $\mathbf{d}$ is \emph{low for computable isomorphism} \cite{fs14}. Intuitively, this means that $\mathbf{d}$ is `computationally weak' in the sense that it provides no additional information when being used to compute isomorphisms. 

Closely related to computable structure theory is the study of \emph{punctual structures}. These are structures with domain $\omega$ and whose operations and relations are uniformly primitive recursive \cite{kmn17}. The study of such structures was motivated by the phenomenon that `feasible' presentations may be extracted from computable ones simply by eliminating the use of unbounded search \cite{grigorieff90,cr91,cr92,cdru09}. Thus, suggesting that isolating the minimisation operator provides insight into the gap between `feasible' presentations and computable ones. Such an approach has attracted a fair amount of attention and has proven to provide a different perspective from computable structure theory (see \cite{bdkmn19,dmn21} for surveys regarding punctual structures). Following this pattern, we study the analogue of degrees of categoricity in the present article.

\subsection{Preliminaries}\label{sec:prelim}

Recall that the \emph{degree of categoricity} of a computable structure is the least Turing degree $\mathbf{d}$ which computes an isomorphism between any two computable presentations of the structure. Evidently, to show that some structure has degree of categoricity $\mathbf{d}$, one must first show that $\mathbf{d}$ is sufficient to produce isomorphisms between any two computable presentations of the given structure. Second, to show that such a degree is `sharp', one typically constructs two computable presentations so that any isomorphism $f$ between these presentations is such that $f\geq_{T}\mathbf{d}$. Roughly speaking, the degree of categoricity of a computable structure measures how algorithmically complex isomorphisms between computable presentations of the given structure are. In order to study the primitive recursive analogue of such notions, we require the following definition.

\begin{definition}[\cite{km21}]\label{def:pr}
    Let $f,g:\omega\to\omega$ be total functions. $f$ reduces to $g$, written $f\leq_{PR}g$ if there is some primitive recursive scheme $\Psi$ such that $\Psi^{g}=f$. If we further have that $g\leq_{PR}f$, then we write $f\equiv_{PR}g$. The induced degree structure shall be referred to as the PR-degrees.
\end{definition}

Just like how Turing degrees are a measure of non-computability, the PR-degrees analogously measures the primitive recursive content of various functions. Intuitively, if $f\leq_{PR}g$, then $g$ grows faster than $f$ does, and is thus more `primitive recursively complex' than $f$. Under this setting, and following the ideas and terminology from computable structure theory, we define:

\begin{definition}[\cite{bk21}]\label{def:catspec}
    For a given punctual structure $A$, we use $\prcat(A)$ to denote the set containing all PR-degrees $\mathbf{d}$ such that for any punctual structure $B\cong A$, there is an isomorphism $f:A\to B$ and $\mathbf{d}\geq_{PR}f,f^{-1}$.
\end{definition}

Since the inverse of a primitive recursive function is not necessarily primitive recursive, it follows that for a given function $f$, $f$ and $f^{-1}$ may belong to different PR-degrees. As such, when defining the primitive recursive categoricity spectrum of a structure as above, we require that each PR-degree $\mathbf{d}\in\prcat(A)$ primitively recursively computes both an isomorphism and its inverse.

In computability theory, the Turing degrees $\mathbf{0},\mathbf{0}',\mathbf{0}'',\dots$ serves as a `spine' to classify and rank the algorithmic content of various objects or processes. In a similar vein, we define the following collections of PR-degrees:

\begin{definition}[\cite{bk21}]\label{def:cone}
    For each computable ordinal $\alpha>0$, $\cone(\Delta_{\alpha}^{0})$ is the set containing all PR-degrees $\mathbf{d}$ such that $\mathbf{d}\geq_{PR}f$ for any total $\Delta_{\alpha}^{0}$-function $f$.
\end{definition}

From Definitions \ref{def:cone} and \ref{def:catspec}, it is not difficult to obtain the following:

\begin{fact}\label{fact:conesubspec}
    If $A$ is $\Delta_{\alpha}^{0}$-categorical, then $\cone(\Delta_{\alpha}^{0})\subseteq\prcat(A)$.
\end{fact}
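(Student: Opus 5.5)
The plan is to unwind the definitions directly. Fix a PR-degree $\mathbf{d}\in\cone(\Delta_{\alpha}^{0})$ and an arbitrary punctual structure $B\cong A$; we must produce an isomorphism $f:A\to B$ with $\mathbf{d}\geq_{PR}f$ and $\mathbf{d}\geq_{PR}f^{-1}$. Since punctual structures are in particular computable structures (their operations and relations are primitive recursive, hence computable, and the domain is $\omega$), $A$ and $B$ are two computable presentations of the same structure. By the assumption that $A$ is $\Delta_{\alpha}^{0}$-categorical, there is an isomorphism $f:A\to B$ that is $\Delta_{\alpha}^{0}$, i.e.\ computable relative to $\emptyset^{(\alpha)}$ or the appropriate level of the hyperarithmetic hierarchy, following the convention the paper uses for $\Delta_{\alpha}^{0}$-functions.

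The next step is to check that $f^{-1}$ is also a total $\Delta_{\alpha}^{0}$-function. Since both domains are all of $\omega$ and $f$ is a bijection, $f^{-1}$ is total, and $f^{-1}(y)$ can be computed from an oracle for $f$ by unbounded search for the unique $x$ with $f(x)=y$, which always terminates. Hence $f^{-1}\leq_T f$, and so $f^{-1}$ is $\Delta_{\alpha}^{0}$ as well, since the class of $\Delta_{\alpha}^{0}$-functions is closed under Turing reducibility relative to the same oracle. It is important that this search is only needed at the Turing level and not primitive recursively, because Definition~\ref{def:cone} applies to every total $\Delta_{\alpha}^{0}$-function.

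Finally, apply Definition~\ref{def:cone} twice: because $\mathbf{d}\in\cone(\Delta_{\alpha}^{0})$ and both $f$ and $f^{-1}$ are total $\Delta_{\alpha}^{0}$-functions, we get $\mathbf{d}\geq_{PR}f$ and $\mathbf{d}\geq_{PR}f^{-1}$. Since $B$ was arbitrary, this shows $\mathbf{d}\in\prcat(A)$ by Definition~\ref{def:catspec}. The only point that needs some care is the convention for $\Delta_{\alpha}^{0}$ at limit levels, namely whether it means $\emptyset^{(\alpha)}$-computable or $\emptyset^{(\alpha-1)}$-computable for successors. The argument does not depend on which convention is chosen, as long as $\Delta_{\alpha}^{0}$-categoricity and $\cone(\Delta_{\alpha}^{0})$ use the same one. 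Beyond that, the proof has no real obstacle.
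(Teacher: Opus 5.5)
Your proof is correct and is exactly the definitional unwinding the paper intends; the paper gives no separate argument and says only that the fact follows from Definitions~\ref{def:cone} and~\ref{def:catspec}. The one point that needs care is that $f^{-1}\leq_T f$, so $f^{-1}$ is also a total $\Delta^0_\alpha$-function, and you handle it correctly. Your aside on conventions is slightly muddled: the usual one is $\emptyset^{(n-1)}$-computable for finite $n$ and $\emptyset^{(\alpha)}$-computable for infinite $\alpha$, and your first paragraph's ``$\emptyset^{(\alpha)}$'' is off by one in the finite case. As you note, though, the argument does not depend on which convention is used.
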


In some sense, this is the analogue of $\Delta_{\alpha}^{0}$-categoricity of computable structures for punctual structures. In order to show that this bound is sharp, i.e., that $\Delta_{\alpha}^{0}$ is the least primitive recursive degree of categoricity for $A$, we show that $\prcat(A)\subseteq\cone(\Delta_{\alpha}^{0})$, and hence $\prcat(A)=\cone(\Delta_{\alpha}^{0})$. This means that the PR-degrees which primitively recursively compute isomorphisms of $A$ exactly coincide with the PR-degrees which primitively recursively compute $\Delta_{\alpha}^{0}$-functions, and thus, is analogous to the notion of degree of categoricity for computable structures.

Finally, we say that a PR-degree $\mathbf{d}$ is \emph{low for punctual isomorphism} if for any structure $A\cong B$ such that $f:A\to B$ is an isomorphism and $f\oplus f^{-1}\leq_{PR}\mathbf{d}$, then $A$ and $B$ are punctually isomorphic. We say that a PR-degree $\mathbf{d}$ is a \emph{degree of punctual categoricity if there exists a punctual structure $A$ such that $\mathbf{d}$ is the least degree in the spectrum $\prcat(A)$. To our best knowledge, the only known example of degrees of punctual categoricity is provided by the following result: if the graph of a total function $f(x)$ is primitive recursive, then $\deg(f)$ is a degree of punctual categoricity (Proposition~2.7 in~\cite{KMN-17-AL}). We also cite~\cite{km21,KK-24} for related results.} 

\

In this article, we study the primitive recursive categoricity spectra of injection structures, showing that they generally coincide with the analogous notions of $\Delta_{n}^{0}$-categoricity. Nonetheless, in Section \ref{sec:pathological}, we give an example of a computably categorical injection structure $A$ for which $\prcat(A)\not\subseteq\cone(\Delta_{1}^{0})$, contrasting with the results obtained in Section \ref{sec:inj} and the companion paper \cite{bbkn25}. In the final section of this article, we also show that within each non-zero c.e.~Turing degree, there are PR-degrees which exhibit `contrasting' behaviour; in every non-zero c.e.~Turing degree, there is a PR-degree low for punctual isomorphism and also a PR-degree that is a degree of punctual categoricity.

\section{Injection Structures}\label{sec:inj}

An injection structure is a structure with a single unary function symbol, such that the function is injective. It is evident that such structures may be characterised by the number and types of orbits it contains. More specifically, each orbit is either finite, or is an $\omega$-chain, or is a $\zeta$-chain. We first show that such structures also possess punctual presentations.

\begin{theorem}[folklore]\label{thm:puncinj}
    Let $A$ be a computable injection structure with at least one infinite orbit. Then $A$ has a punctual presentation.
\end{theorem}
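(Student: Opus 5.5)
We build a punctual copy $B$ of $A$ by a primitive recursive delayed simulation, using the infinite orbit as a reservoir for fresh elements. The basic tension is this. At every stage we must commit to the value of the function on every element already in the domain, and the domain must be all of $\omega$, enumerated without unbounded delay. Meanwhile the computation of $A$ may take arbitrarily long to reveal the orbit structure.

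The first step is to fix an infinite orbit $O$ of $A$. It is either an $\omega$-chain or a $\zeta$-chain; we know which, and we know a point $a_0\in O$ as finite non-uniform information. In $B$ we start building a chain $c_0, c_1, c_2,\dots$ with $f^B(c_i)=c_{i+1}$, intended as the forward part of the image of $O$ from $a_0$. If $O$ is a $\zeta$-chain, we also grow it backwards. At stage $s$ we run the computation of $A$ for $s$ steps. This gives a finite partial injection $A_s$: the elements $0,\dots,k$ whose $f^A$-values have converged within $s$ steps.

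The second step is to copy the rest of $A$ at each stage. We copy the finite portion of $A_s$ seen so far into $B$, defining $f^B$ on new elements as the corresponding $f^A$-values appear.

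The delicate point is an element $b\in B$ standing for $a\in A$ whose successor $f^A(a)$ has not yet been seen to converge, or has not yet appeared among the elements we have enumerated. We cannot wait, because $f^B(b)$ must be defined primitive recursively, and in particular each newly added element of $B$ must get its $f^B$-value promptly. The trick is to use ``open ends''. For each currently open $b$ we declare $f^B(b)$ to be a fresh element, and we keep extending that tail with a fresh element at each stage. Later, if $a$'s orbit is revealed to be a finite cycle, or to merge into a known chain, the tail we have built must be absorbed.

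Absorption is done as follows. The tail $b\to t_1\to\dots\to t_m$ is a finite $\omega$-like segment, and $t_m$ is still open. We cannot re-route $t_1,\dots,t_m$, since $f^B$ on them is already defined. Instead we splice the whole segment into the reservoir orbit. Concretely, the tail's images become genuine parts of $B$'s copy of $A$'s structure, and the reservoir chain absorbs any surplus. An infinite orbit can absorb any finite $\omega$-segment glued at its end, say by making $f^B(t_m)$ the current head of the reservoir's backward end, or by inserting the segment into the chain if the reservoir is an $\omega$-chain. This relies on the reservoir having an ``open'' end that we control, with all of its identifications postponed.

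The cleanest way to organise this is to let $B$'s image of $O$ not be tied to $a_0$ exactly. Rather, $B$'s copy of $O$ is the limit of the reservoir chain together with all absorbed segments, and every element of $A$'s copy is placed where it belongs only once $A$ confirms its orbit relations. Since $O$ is infinite, inserting finitely many extra elements at a time does not change its isomorphism type (beyond its type as an $\omega$- or $\zeta$-chain).

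One subtlety remains. If $a$'s orbit in $A$ is itself finite, say of size $n$, then the true cycle must be built from elements with already-defined successors. To handle this we never commit an element of $A$'s finite orbits to $B$ until its whole cycle has been found in $A_s$; until then it lives in $B$ as a reservoir element. Elements lying in infinite orbits of $A$ get copied with tails that eventually become the true chain. Verification then has two parts. First, each stage is primitive recursively bounded, so $B$ is punctual. Second, every orbit of $A$ is reproduced exactly once and the reservoir has the type of $O$, which gives $B\cong A$. The main obstacle is this bookkeeping: ensuring that the extra elements dumped into the reservoir do not create spurious orbits, and that infinite orbits of $A$ other than $O$ are copied with the correct type ($\omega$ versus $\zeta$).
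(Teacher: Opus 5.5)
There is a genuine gap, and it sits exactly in the step you postpone as ``bookkeeping''. Your rule is that an element of a finite orbit of $A$ is not committed until its cycle closes, and lives in the reservoir until then. This rule is not effective. Lying in a finite orbit is only a $\Sigma^0_1$ property, so at every finite stage an element of a not-yet-closed cycle looks exactly like an element of an infinite orbit. Taken literally, the rule sends every element not yet seen in a closed cycle into the reservoir. Then no infinite orbit other than $O$ ever gets a component of its own, and $B\not\cong A$ as soon as $A$ has two infinite orbits.

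If such elements instead get their own components with open tails, you must say what happens when two components already built in $B$ turn out to lie in one orbit of $A$. That is again a $\Sigma^0_1$ event, typically discovered after $f^B$ is fixed on both. You must also say which component survives, and prove that each infinite orbit $X\neq O$ ends up with exactly one permanent component. That component needs a first element if $X$ is an $\omega$-chain, and must be extended backwards infinitely often if $X$ is a $\zeta$-chain. None of this is supplied. Moreover, ``placed where it belongs only once $A$ confirms its orbit relations'' cannot be implemented, since $A$ never confirms that an orbit is infinite or that an element has no preimage.

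The absorption step also fails as stated. An $\omega$-chain reservoir cannot absorb by ``insertion'', because its interior already has $f^B$ fixed. Gluing onto its first element infinitely often would turn it into a $\zeta$-chain. The only safe place is its open forward end, attached to the first element of the \emph{whole} component of $b$, which may already contain $B$-preimages of $b$.

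The missing idea is that none of this tracking is needed. The isomorphism type of an injection structure is determined by the number of cycles of each finite size, together with the numbers $N_0,N_1\in\omega\cup\{\infty\}$ of $\omega$-chains and $\zeta$-chains. The paper fixes $N_0,N_1$ non-uniformly and never copies the infinite orbits of $A$ at all. At stage $s$ it grows $\min\{s,N_0\}$ standard $\omega$-chains and $\min\{s,N_1\}$ standard $\zeta$-chains. Since some $N_i>0$, this also supplies the padding that keeps the copy punctual. It adds a closed cycle of size $n$ only when the simulation of $A$ reveals a finite cycle of size $n$, a c.e.\ event handled once per cycle.

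Your route, which uses only $a_0$ and the type of its orbit, can be completed, but it needs an explicit merge rule. For example, dump a component into the open end of the reservoir as soon as one of three things is observed: it closes into a cycle (then build that cycle afresh), it reaches the orbit of $a_0$, or it shares an orbit with a component started from a smaller element of $A$. The surviving component is then extended, backwards by prepending preimages, to represent the dumped elements. With this rule the component started from the least element of each infinite orbit $X\neq O$ is permanent, and you still have to verify that it has the type of $X$.
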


\begin{proof}
Let $A$ be a computable injection structure with at least one infinite orbit. Non-uniformly fix $N_{0},N_{1}\in\omega\cup\{\infty\}$ as the number of orbits in $A$ of type $\omega$ and $\zeta$ respectively. A punctual presentation of $P$ of $A$ may be obtained as follows. At each stage $s$, enumerate $n_{0}+n_{1}$ many elements into $P$, where $n_{i}=\min\{s,N_{i}\}$, extending the infinite orbits in the obvious way. Since at least one of $N_{0},N_{1}$ is nonzero, this ensures that $P$ remains punctual. For any orbits of finite size, we simply wait for their size to be revealed in $A$ (this is a c.e.~ wait), once revealed, we enumerate a finite cycle of the same size into $P$.
\end{proof}

Recall that a computable injection structure $A$ is relatively $\Delta^0_1$-categorical if and only if $A$ is $\Delta^0_1$-categorical if and only if $A$ has only finitely many infinite orbits \cite[Corollary 2.5]{chr14}.

\begin{theorem}\label{thm:d1inj}
    If $A$ is a relatively $\Delta_{1}^{0}$-categorical injection structure with at least one infinite orbit and infinitely many finite orbits, then $\prcat(A)=\cone(\Delta_{1}^{0})$.
\end{theorem}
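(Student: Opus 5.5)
One inclusion is immediate. $A$ is relatively $\Delta_1^0$-categorical, so it is $\Delta_1^0$-categorical, and Fact~\ref{fact:conesubspec} gives $\cone(\Delta_1^0)\subseteq\prcat(A)$. All the work is in the reverse inclusion $\prcat(A)\subseteq\cone(\Delta_1^0)$. For this I fix a PR-degree $\mathbf{d}\in\prcat(A)$ and an arbitrary total computable function $g$, and I aim to show $g\leq_{PR}\mathbf{d}$. Since $g$ is computable, it is primitive recursive in the running time of any Turing machine computing it. It therefore suffices to build, uniformly from a machine $\Phi$ computing $g$ (without loss of generality $\Phi$ is monotone, e.g.\ it outputs $\max_{y\le x}$ of the halting time), a punctual copy $B\cong A$ such that any isomorphism $f\colon A\to B$ satisfying $f\oplus f^{-1}\le_{PR}\mathbf{d}$ lets us primitively recursively bound the halting time of $\Phi(x)$ for every $x$.

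For the construction of $B$, I use the fact that $A$ has finitely many infinite orbits and at least one of them. Non-uniformly fix the number of $\omega$- and $\zeta$-orbits and the finite-orbit data of $A$; this data is a c.e.\ multiset of cycle sizes, and there are infinitely many of them. I first fix a punctual copy $A$ itself, using Theorem~\ref{thm:puncinj}, with the infinite orbits as fixed chains. I then build $B$ by stalling the finite cycles: at each stage $B$ grows one of its infinite chains by one element, which keeps it punctual, and the $x$-th finite cycle of $A$ (in some fixed enumeration) is copied into $B$ only at a stage $s\ge$ the halting time of $\Phi(x)$. To make the cycles distinguishable, I pair the $x$-th cycle of $A$ with a specific position. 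Because the cycle sizes may repeat, I index cycles by their appearance in $A$'s punctual presentation and delay the $x$-th appearance of each size. The elements of that cycle in $B$ then receive numbers at least the halting time $t_x$.

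To read off $g$, take the element $a_x$ in the $x$-th finite cycle of $A$; it is located primitive recursively in the punctual copy. Then $f(a_x)$ lies in a finite cycle of $B$ of the same size. Here is the point where the argument needs care: $f$ need not send the $x$-th cycle to the $x$-th cycle of $B$, only to some cycle of equal size. I would handle this by delaying uniformly, so that all finite cycles of $B$ enumerated after stage $t_x$ carry large numbers. Any $x+1$ distinct finite cycles of $A$ must map to $x+1$ distinct finite cycles of $B$, and only at most $x$ (or at most boundedly many, via a pigeonhole count) of these could have been enumerated before $\Phi$ has converged on inputs $\le x$. So $\max_{i\le x} f(a_i)\ge t_x$, or at least $\ge$ the halting time of $\Phi$ on $\le x$ up to a primitive recursive correction. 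Then $\max_{i\le x} f(a_i)$ is $\mathbf{d}$-primitive recursive, and running $\Phi(x)$ for that many steps computes $g(x)$ primitive recursively in $\mathbf{d}$.

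I expect the main obstacle to be this counting step, in particular arranging that the $x$-th batch of finite cycles in $B$ is genuinely forced late even though $A$'s finite cycles may have repeated sizes and appear irregularly in the presentation. If plain pigeonhole fails, I would instead delay only every second cycle and use $f^{-1}$ together with the infinite chains to bound positions.
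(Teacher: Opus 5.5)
\textbf{Genuine gap.} The gap is in the read-off step, at the claim that $a_x$, an element of the $x$-th finite cycle of $A$, ``is located primitive recursively in the punctual copy.'' It is not.

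In the copy from Theorem~\ref{thm:puncinj}, a finite cycle is enumerated only once its size has been revealed in the computable copy. So the stage $G(x)$ at which the $x$-th finite cycle appears is computable, but in general not primitive recursive in $x$. The paper's proof singles out exactly this as the obstruction.

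Choosing a cleverer punctual copy does not help. Suppose $A$ consists of one $\omega$-chain and one cycle of each size in a computable set $S$ that has no infinite primitive recursive subset. Take any primitive recursive sequence $(a_x)$ of elements in pairwise distinct finite cycles of a punctual copy. Then the set of $k$ such that some $x\le k$ has $a_x$ in a cycle of size exactly $k$ is primitive recursive, contained in $S$, and infinite (because the sizes are pairwise distinct). That contradicts the choice of $S$.

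Without $x\mapsto a_x$ primitive recursive, $\max_{i\le x}f(a_i)$ is not primitive recursive in $f$, and the reduction $g\le_{PR}\mathbf{d}$ is lost. By contrast, the counting step you flag as the main obstacle is fine: if $B$ adds its finite cycles in $A$'s order with monotone delays, then $x+1$ distinct images cannot all lie among the $x$ cycles present in $B$ before $t_x$. Your fallback (delaying every second cycle, using $f^{-1}$ and the infinite chains) does not address the real problem.

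The missing idea is to make $B$'s delays encode $G$ as well as $g$. Copy the $x$-th finite orbit of $A$ into $B$ only after $g(x)$ has converged \emph{and} $A$ has revealed its $(x+1)$-th finite orbit. Then a primitive recursion in $f$ works, with $G(0)$ fixed non-uniformly:
\begin{enumerate}[label=(\roman*)]
    \item From a bound on $G(x+1)$, find the first $x+2$ finite orbits of $A$ by bounded search.
    \item Apply $f$ to these orbits.
    \item By the same pigeonhole count, the largest image index bounds both the convergence stage of $g(x+1)$ and $G(x+2)$, which feeds the next step.
\end{enumerate}
This bootstrapping, in which each isomorphism query also tells you where to look next in $A$, is how the paper obtains $g\le_{PR} f\oplus f^{-1}$.
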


\begin{proof}
From Fact \ref{fact:conesubspec}, we have that $\cone(\Delta_{1}^{0})\subseteq\prcat(A)$. In order to prove the remaining inclusion, for any total computable $g$, we produce punctual injection structures $A$ and $B$ such that any isomorphism $h:A\to B$ is such that $g\leq_{pr}h\oplus h^{-1}$.

\

\emph{Encoding $g(0)$:} Since $g$ is a total computable function, all we need is to encode is the stage $s$ at which $g(0)[s]\downarrow$. We shall do this by enumerating finite orbits into one structure $A$ `quickly' while `delaying' their enumeration into $B$ until after stage $s$ is reached. The main tension here is that we have to keep both $A$ and $B$ punctual; we cannot arbitrarily delay enumerating elements. Since we assumed that $A$ has at least one infinite orbit, then while waiting for $g(0)\downarrow$, we enumerate elements into the infinite orbit to keep $A$ and $B$ punctual. Eventually, $A$ has to reveal some finite orbit. Once this finite orbit is found and $g(0)$ has converged, then we enumerate the same finite orbit into the structure $B$. If $h:A\to B$ is an isomorphism, then by computing $h$ on the elements within the finite orbit, we may recover the stage where $g(0)$ converges.

\

\emph{Encoding $g(x)$:} The obvious generalisation of the strategy described above would be to compute $h$ on the first $x$ many finite orbits revealed in $A$, and then arguing that at least one of these images must have index larger than the stage at which $g(x)$ converges. However, we note here that this process is not necessarily primitive recursive in $h\oplus h^{-1}$ and $x$. In particular, an arbitrary punctual injection structure $A$ could enumerate finite orbits only at extremely sparse stages, meaning that the index of the $x^{th}$ finite orbit in $A$ cannot be obtained in a primitive recursive way.

To solve this issue, in addition to encoding the stage at which $g(x)$ converges, we also encode the stage $G(x+1)$ at which $A$ enumerates the $x+1$-th finite orbit. More specifically, we enumerate the $x$-th finite orbit into $B$ only after the $x+1$-th finite orbit has been enumerated into $A$.

Now, we may define $\Psi^{h}$ as follows. Non-uniformly fix $G(0)$, the stage at which $A$ reveals the first finite orbit. From this stage, we may recover primitive recursively the indices of the elements in this finite orbit and then compute $h$ on them. The image must produce indices large enough such that $A$ has enumerated the next finite orbit and $g(0)$ has converged. Recursively, we may suppose that $\Psi^{h}(x)=g(x)\oplus G(x+1)$. To define $\Psi^{h}(x+1)$, use $G(x+1)$ to recover the indices of the $x+1$-th finite orbit in $A$ and compute $h$ on all elements of the first $x+1$ many finite orbits in $A$. Since $B$ never enumerates the $x+1$-th finite orbit until $A$ enumerates the $x+2$-th finite orbit and $g(x+1)$ converges, one of the $h$-images must produce an index $\geq G(x+2),g(x+1)$. That is to say, $h\geq_{pr}g\oplus G\geq_{pr}g$.
\end{proof}

\

Observe that the existence of an infinite orbit in $A$ allows us to `waste time' arbitrarily and delay enumerating the finite orbits into $B$. The proof above can thus be easily modified to fit the cases where there is some mechanism to `waste time'. For instance, the result should also hold if $A$ is relatively $\Delta_{1}^{0}$-categorical and has some finite size repeated infinitely often. Recall that the remaining case for $\Delta_{1}^{0}$-categorical injection structures are those with no infinite orbits and only finitely many orbits of each finite size. There are certain special cases in which we can apply a modification of the proof above to obtain $\prcat(A)\subseteq\cone(\Delta_{1}^{0})$, but surprisingly, it does not hold in general (see Theorem \ref{thm:inj} for the details).

A computable injection structure $A$ is relatively $\Delta^0_2$-categorical if and only if $A$ is $\Delta^0_2$-categorical if and only if $A$ has finitely many $\zeta$ chains or finitely many $\omega$ chains \cite[Corollary 3.3]{chr14}.

\begin{theorem}\label{thm:d2inj}
    If $A$ is a relatively $\Delta_{2}^{0}$-categorical injection structure which is not relatively $\Delta^0_1$-categorical, then $\prcat(A)=\cone(\Delta_{2}^{0})$.
\end{theorem}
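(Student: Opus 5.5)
By Fact~\ref{fact:conesubspec} and the characterisation of $\Delta^0_2$-categoricity from \cite[Corollary 3.3]{chr14}, we already have $\cone(\Delta_{2}^{0})\subseteq\prcat(A)$. For the reverse inclusion, fix a total $\Delta^0_2$ function $g$ with a computable approximation $g(x)=\lim_s g(x,s)$. We will build two punctual copies $P,Q\cong A$ such that every isomorphism $h:P\to Q$ satisfies $g\leq_{PR}h\oplus h^{-1}$. Since $A$ is not relatively $\Delta^0_1$-categorical, it has infinitely many infinite orbits. Since it is $\Delta^0_2$-categorical, exactly one of the two infinite types, $\omega$ or $\zeta$, occurs infinitely often. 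All finitely many orbits of the other type, together with the finite orbits, are fixed non-uniformly and enumerated identically and slowly into both copies, as in the proof of Theorem~\ref{thm:puncinj}. The infinite family of chains will carry the coding, with chain number $x$ reserved for coding $g(x)$. In $P$ we build every chain ``transparently'': for each element we know primitive recursively which chain it belongs to and at which position. In $Q$ we let the chains evolve according to the approximation to $g$.

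\emph{Case of infinitely many $\omega$-chains.} In $Q$, chain $x$ starts with a head $b_x^0$ at a stage $\geq x$. Whenever $g(x,s)\neq g(x,s-1)$, we add a fresh element $c$ with $f(c)$ equal to the current head, so $c$ becomes the new head. There are only finitely many changes, so chain $x$ is still an $\omega$-chain. Each chain is also extended forward at every stage, which keeps $Q$ punctual. Moreover, the true head of chain $x$ has index at least the last stage at which $g(x,\cdot)$ changes. To compute $g(x)$ from $h\oplus h^{-1}$, we proceed as follows.
\begin{enumerate}
\item Take $b_x^0$, which is primitive recursive in $x$.
\item Compute $a=h^{-1}(b_x^0)$.
\item Read off in $P$, primitive recursively, the head $a^*$ of the chain of $a$.
\item Compute $t=h(a^*)$, which is the true head of chain $x$ in $Q$.
\item Output $g(x,t)$.
\end{enumerate}
The output is correct because $g(x,\cdot)$ is stable from stage $t$ on.

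\emph{Case of infinitely many $\zeta$-chains.} Here there are no heads, so we code through predecessors, which the language does not force us to supply quickly. In $Q$, chain $x$ has a designated ``anchor'' point, and the chain is extended backwards only from the current anchor. Each time $g(x,\cdot)$ changes, we move the anchor: the next element attached backwards is taken fresh, with a large index, and backward growth is otherwise kept at a fixed primitive recursive rate. This makes the chain still a $\zeta$-chain, while the element $k$ steps before $b_x^0$ in the final chain has index exceeding the last change, for a $k$ we can bound. The decoding is analogous to the $\omega$ case.
\begin{enumerate}
\item Compute $a=h^{-1}(b_x^0)$.
\item Find in $P$, primitive recursively, the predecessor $a'$ of $a$ at the appropriate depth.
\item Use $h(a')$ as a stage bound at which to evaluate $g(x,\cdot)$.
\end{enumerate}

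The main obstacle is this $\zeta$ case. We must arrange that there is a primitive recursive bound on the depth at which the ``late'' element sits, while still growing every chain infinitely in both directions and keeping $Q$ punctual. My first attempt would be the following rule: between changes, chain $x$ receives at most one new backward element per stage, and each change of $g(x,\cdot)$ resets a counter. This makes the depth at which the late element sits computable from $x$ and from the position $h^{-1}$ returns.
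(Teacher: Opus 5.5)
Your $\omega$-chain case is correct, and its decoding differs from the paper's. You use $h^{-1}(b^0_x)$ to land in the right chain of $P$ and then apply $h$ to that chain's head. The paper instead glues chains and recovers a late stage by pigeonhole from $h(a_{0,0}),\dots,h(a_{x+1,0})$ alone. You should take the approximation $g(x,s)$ to be primitive recursive, not merely computable: this is needed both to keep $Q$ punctual and to evaluate $g(x,t)$ inside a primitive recursive scheme, and every $\Delta^0_2$ function has such an approximation.

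The $\zeta$ case, however, fails, and no choice of depth or counter rule can repair it. In your design, chain $x$ of $Q$ is one fixed orbit containing $b^0_x$. Every element you add receives, when it is created, a known offset from $b^0_x$, and these offsets never change because $f_Q(c)=b$ is permanent once declared. The same is true in $P$. Consider the map sending the element at offset $p$ from the origin of the $x$-th coding chain of $P$ to the element at offset $p$ from $b^0_x$ in $Q$, together with a fixed matching of the finitely many $\omega$-chains and the identically enumerated finite orbits. This is an isomorphism $P\to Q$, and it and its inverse are computable (even primitive recursive if backward growth has a primitive recursive rate, as you intend). So $P$ and $Q$ are computably isomorphic. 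Then ``$g\le_{PR}h\oplus h^{-1}$ for every isomorphism $h$'' would make $g$ computable, which is false for, e.g., $g=\chi_{\emptyset'}$. Your decoding shows the same defect locally. Your $h(a')$ is just the depth-$k$ predecessor of $b^0_x$ in $Q$: it does not depend on $h$ and is found by a computable search in $Q$. The position of $h^{-1}(b^0_x)$ in $P$ carries no information about $g$, since you can compose $h$ with a shift of the relevant $\zeta$-chain of $P$.

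The $\omega$ case works because the head is definable from $b^0_x$ but not computably locatable. A $\zeta$-chain has no such element, so the noncomputable information has to sit in \emph{which elements of $Q$ end up in the same orbit}. That is the idea you are missing, and the paper uses it for both types. When $g(x,s)\neq g(x,s-1)$, the current chains $x+1,\dots,s-1$ are glued onto chain $x$, by joining the current right end of chain $j-1$ to the current left end of chain $j$ for $x<j<s$. Fresh chains $x+1,\dots,s$ are then started from new elements. Meanwhile every chain is extended to the right, and for $\zeta$-chains also to the left, at every stage in both copies. Let $s^*$ be the last stage at which $g(y,\cdot)$ changes for some $y\le x$. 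After $s^*$, no chain numbered $\le x$ is glued or restarted. Hence every final chain numbered $\ge x+1$ consists only of elements created at stages $\ge s^*$. The designated elements $a_{0,0},\dots,a_{x+1,0}$ of $P$ lie in $x+2$ distinct coding chains. So their $h$-images lie in $x+2$ distinct coding chains of $Q$, and one of these must be numbered $\ge x+1$. Therefore $g(x)=g(x,\max_{i\le x+1}h(a_{i,0}))$. Nothing in this argument refers to heads, which is why it carries over from $\omega$-chains to $\zeta$-chains.
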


\begin{proof}
If an injection structure $A$ is relatively $\Delta^0_2$-categorical, but not relatively $\Delta^0_1$-categorical, then its infinite orbits must be one of the following types:
\begin{enumerate}[label=(\roman*)]
    \item Finitely many $\zeta$ chains and infinitely many $\omega$ chains.
    
    \item Only finitely many $\omega$ chains and infinitely many $\zeta$ chains.
\end{enumerate}

Applying Fact \ref{fact:conesubspec}, we clearly have that $\cone(\Delta_{2}^{0})\subseteq\prcat(A)$. It remains to prove that $\prcat(A)\subseteq\cone(\Delta_{2}^{0})$. That is, given $g$, a total $\Delta_{2}^{0}$ function, we define two punctual injection structures $A$ and $B$ such that $g\leq_{pr}h\oplus h^{-1}$ for any isomorphism $h:A\to B$.

Fix a primitive recursive approximation $g^{*}$ such that $\lim_{s}g^{*}(x,s)=g(x)$ for all $x$, and fix some injection structure of Type (i) that has a punctual presentation. In $A$ and $B$, we never enumerate any finite orbits until they are observed to close in the given punctual (or computable) injection structure. More specifically, we only enumerate $\omega$ chains while waiting for finite orbits to be enumerated into the given punctual injection structure. Similarly, we may non-uniformly fix the finitely many $\zeta$ chains and enumerate them in a standard way during the construction. This ensures that $A$ and $B$ are of the same isomorphism type as the given punctual injection structure. Obviously, we shall use the $\omega$ chains to encode the stabilising stages $s_{x}$; $g^{*}(x,s_{x})=g^{*}(x,s)$ for all $s\geq s_{x}$.

\

\emph{Encoding $s_{x}$:} As mentioned above, we construct punctual structures $A=(\omega,f_{A})$ and $B=(\omega,f_{B})$ such that any isomorphism from $A$ to $B$ encodes $g$. In $A$, all $\omega$ chains will be standard; this allows us to access the left-most element of each $\omega$ chain in a primitive recursive way. We denote these elements as $a_{0,0},a_{1,0},a_{2,0},\dots$. At each stage $s$, enumerate $a_{s,0}$ and for each $a_{i,j}$ currently in $A$ where $f_{A}(a_{i,j})$ has yet to be defined, enumerate $a_{i,j+1}\coloneqq f_{A}(a_{i,j})$. This ensures that $f_{A}$ is primitive recursive and $A$ is punctual.

In $B$, we shall use the left-most element of each $\omega$ chain in $B$ to encode the stabilising stage $s$ of $g^{*}(x,s)$ for various $x$. At stage $0$, enumerate the element $b_{0}$ and let $b^{l}_{0,0}=b^{r}_{0,0}=b_{0}$. The idea is that $b^{l}_{i,s}$ and $b^{r}_{i,s}$ will denote the current left-most and right-most elements of the $i^{th}$ $\omega$ chain at stage $s$ respectively. At each stage $s$, recursively suppose that $b^{l}_{i,s-1}$ and $b^{r}_{i,s-1}$ have been defined for $i<s$. If $g^{*}(x,s)\neq g^{*}(x,s-1)$, then do the following.
\begin{enumerate}
    \item For each $j$ where $x<j<s$, define $f_{B}(b^{r}_{j-1,s-1})=b^{l}_{j,s-1}$.

    \item For each $j<x$, let $b^{l}_{j,s}=b^{l}_{j,s-1}$, and $b^{r}_{j,s}=b^{r}_{j,s-1}$.

    \item Let $b^{l}_{x,s}=b^{l}_{x,s-1}$ and $b^{r}_{x,s}=b^{r}_{s-1,s-1}$.

    \item Pick fresh indices $i_{x+1},i_{x+2},\dots,i_{s}$, and enumerate the elements $b_{i_{x+1}},b_{i_{x+2}},\dots,b_{i_{s}}$, defining $b^{l}_{j,s}=b^{r}_{j,s}=b_{i_{j}}$ for each $x<j\leq s$.
\end{enumerate}
Intuitively, if $g^{*}(x,s)\neq g^{*}(x,s-1)$, then we `glue' the $j^{th}$ chain for $x<j<s$ to the $x^{th}$ chain at stage $s$, while leaving the first $x$ many chains the same. This ensures that any element in the subsequent chains will have indices $>s$. Finally, to keep $f_{B}$ primitive recursive, for each element $b\in B$ enumerated at stage $s-1$, enumerate some new element $b'$ and let $f_{B}(b)=b'$ if $f_{B}(b)$ is not yet defined. In addition, if $b$ is such that $b=b^{r}_{i,s}$ for some $i$, re-define $b^{r}_{i,s}$ to be $b'$.

\

\emph{Verification:} Recall that any finite cycle or any $\zeta$ chain will be enumerated by `copying' the given punctual injection structure. Thus, to verify that the construction produces the correct isomorphism type, it suffices to show that the procedure above produces infinitely many $\omega$ chains and nothing else. First, observe that for any $b$ enumerated by the procedure above, $f_{B}(b)$ exists; if $b$ was enumerated at stage $s$, by stage $s+1$, $f_{B}(b)$ must have been defined. Thus, there is no right-most element produced by the procedure above. Next, to show that each chain has a left-most element, we argue that for each $i\in\omega$, there exists $s\in\omega$ such that for all $t\geq s,\,b^{l}_{i,s}=b^{l}_{i,t}$. It follows directly from the description that $b^{l}_{0,t}=b^{l}_{0,0}$ for all $t\geq 0$. For $i>0$, consider the stage $s^{*}$ such that $g^{*}(x,s^{*})=g^{*}(x,t)$ for all $t\geq s^{*}$ and for all $x<i$. Since $g$ is a total $\Delta_{2}^{0}$ function, such a stage $s^{*}$ must exist. Following the construction, $f_{B}^{-1}(b^{l}_{i,s})$ is only ever defined when $g^{*}(x,s)\neq g^{*}(x,s-1)$ for some $x<i$. In particular, for all stages $t\geq s^{*}+1$, we have that $g^{*}(x,t)=g^{*}(x,t-1)$ and so, $b^{l}_{i,t}=b^{l}_{i,s^{*}+1}$.

Finally, let $h:A\to B$ be an isomorphism. Define $\Psi^{h}$, a primitive recursive scheme as follows. Given $x$, compute $g^{*}(x,\max\{h(a_{i,0})\mid i\leq x+1\})$. By pidgeonhole principle, at least one of $h(a_{i,0})=b^{l}_{j,s}$ for some $j\geq x+1$ and for some $s\in\omega$. It then follows that $\max\{h(a_{i,0})\mid i\leq x+1\}$ is at least as big as the index of the left-most element of the $x+1$-th $\omega$ chain in $B$, which encodes a stage large enough such that $g^{*}$ has stabilised for all $y\leq x$. In particular, $g^{*}(x,\max\{h(a_{i,0})\mid i\leq x+1\})=g(x)$.



For injection structures of type (ii), we adopt a similar strategy. The only difference is that we must produce infinitely many $\zeta$ chains and nothing else when encoding $g$. That is, in addition to the procedure defined above, at each stage $s$, for each $b$ currently defined as $b^{l}_{i,s}$ for some $i$, enumerate a new element $b'$, define $f_{B}(b')=b$, and re-define $b^{l}_{i,s}=b'$. In $A$, we take a similar approach and extend the chains to the left at each stage by adding $a_{i,j}$ for $j<0$ and letting $f_{A}(a_{i,j})=a_{i,j+1}$ as before.

Using the modified construction, it is not too difficult to see that instead of $\omega$ chains, we now produce infinitely many $\zeta$ chains and nothing else. Furthermore, for each $x$, the $x+1$-th $\zeta$ chain in $B$ has the property that all elements within it have indices $\geq s$, where $s$ is such that for all $t\geq s$ and for all $y\leq x$, $g^{*}(y,t)=g^{*}(y,s)$. For the same reasons as before, $g(x)=g^{*}(x,\max\{h(a_{i,0})\mid i\leq x+1\})$, and thus $g\leq_{pr}h$. Theorem~\ref{thm:d2inj} is proved.
\end{proof}

\

Note that every computable injection structure is relatively $\Delta^0_3$-categorical \cite[Theorem 3.6]{chr14}.

\begin{theorem}\label{thm:d3inj}
    If $A$ is a relatively $\Delta_{3}^{0}$-categorical injection structure which is not relatively $\Delta^0_2$-categorical, then $\prcat(A)=\cone(\Delta_{3}^{0})$.
\end{theorem}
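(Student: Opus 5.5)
By \cite[Theorem 3.6]{chr14} and \cite[Corollary 3.3]{chr14}, an injection structure that is relatively $\Delta_{3}^{0}$-categorical but not relatively $\Delta_{2}^{0}$-categorical has infinitely many $\omega$ chains and infinitely many $\zeta$ chains. Fact \ref{fact:conesubspec} gives $\cone(\Delta_{3}^{0})\subseteq\prcat(A)$. For the reverse inclusion, fix a total $\Delta_{3}^{0}$ function $g$. I will build punctual copies $A$ and $B$ of the given structure such that $g\leq_{pr}h\oplus h^{-1}$ for every isomorphism $h:A\to B$.

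The construction follows the proof of Theorem \ref{thm:d2inj}. Finite orbits are copied from a fixed punctual presentation, and time is wasted by growing infinite chains. In $A$, all chains are standard. The $\omega$ chains are $a_{0,0},a_{1,0},\dots$, with left-most elements primitively recursively accessible. The $\zeta$ chains have designated elements $c_{0},c_{1},\dots$ that are also primitively recursively accessible. The extra quantifier will be handled with a primitive recursive $\Delta_{2}^{0}$-style approximation. Write $g=\lim_{s}\tilde g(x,s)$, where $\tilde g\le_T 0'$. Equivalently, take a primitive recursive $g^{*}(x,s,t)$ with $\lim_{s}\lim_{t}g^{*}(x,s,t)=g(x)$. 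The relevant $\Sigma_{2}$ event is ``the $s$-th guess about $x$ is not final''. In $B$, this event is encoded by whether a chain eventually acquires a left end. Concretely, in $B$ I keep a sequence of ``$\zeta$-candidate'' chains. While the inner limit is still changing for some $y\le x$, the relevant candidate gets extended to the left, as in the type (ii) modification. Whenever the outer guess changes, the chains indexed above $x$ are glued together and restarted with fresh (large) indices, as in the gluing steps (1)--(4). Chains abandoned as $\zeta$ candidates stop growing leftward and become $\omega$ chains with a left end of large index. Chains that keep growing leftward become $\zeta$ chains. By choosing the bookkeeping so that infinitely many chains of each kind result, the isomorphism type is preserved.

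Decoding would go as follows. Given $x$, compute $h$ on $a_{i,0}$ for $i\le x+1$ and $h^{-1}$ on the first $x+1$ designated $\zeta$-elements of $B$. These can be made primitively recursively accessible by always keeping a current representative with a pointer. A pigeonhole argument as in Theorem \ref{thm:d2inj} then yields a stage $s$ that bounds the stabilization of both approximation levels for all $y\le x$. From $s$ we read off $g(x)=g^{*}(x,s,s)$. Here the $\omega$ chains witness the outer stabilization, since a left end of the $(x+1)$-th $\omega$ chain has large index. The $\zeta$ chains witness the inner one, since $h^{-1}$ of a true $\zeta$ element is forced to be larger than the stage at which the corresponding chain was last restarted.

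The main obstacle is the interleaving. I must make sure that the $\omega$-chain left ends in $B$ and the images of $\zeta$ points really do carry large indices, and at the same time keep $f_B$ primitive recursive and end with infinitely many chains of each type, with no chain left half-formed. The first thing I would try is a tree-of-strategies layout in which node $x$ controls a block of chains. Each block is rebuilt whenever a higher-priority guess changes, and the finite-injury bound is used to show that each block eventually settles.
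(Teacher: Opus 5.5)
There is a genuine gap, and it sits where $\Delta^0_3$ differs from $\Delta^0_2$: the encoding of the outer modulus $s_x$, i.e.\ a stage $s$ with $\lim_t g^*(x,s',t)=g(x)$ for all $s'\ge s$.

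You make the $\omega$ chains carry it, by gluing and restarting ``whenever the outer guess changes''. At a finite stage, however, you only see $g^*(x,s,t)$, never a change of $\lim_t g^*(x,s,t)$. A stagewise outer guess such as $g^*(x,t,t)$ can change infinitely often. Then the gluing of Theorem~\ref{thm:d2inj} fires infinitely often, the chains above $x$ never settle, and the finite-injury bound you invoke fails.

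Worse, no construction can make $\omega$-chain left ends do this job. In any computable $B$ the left ends of $\omega$ chains form a $\Pi^0_1$ set. So some isomorphism $h$ sends $a_{i,0}$ to the $i$-th left end of $B$, and for that $h$ the function $F(x)=\max_{i\le x+1}h(a_{i,0})$ is $\Delta^0_2$. If $F$ bounded $s_x$, then $g(x)=\lim_t g^*(x,F(x),t)$ would be $\Delta^0_2$, which is false for a properly $\Delta^0_3$ function $g$. Hence left ends of $\omega$ chains can only encode the \emph{inner} moduli $t_{x,s}$. That is exactly how the paper uses them. $A$ has standard chains $a_{i,\cdot}$. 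In $B$, the left end of chain $i$ is pushed to a fresh element of index $\ge t$ whenever $g^*(x',s',t)\neq g^*(x',s',t-1)$ for some $\langle x',s'\rangle\le i$.

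The outer modulus must therefore come from the $\Pi^0_2$ dichotomy $\zeta$ versus $\omega$, and there your decoding points the wrong way. You apply $h^{-1}$ to ``designated $\zeta$ elements of $B$'' and land in standard $\zeta$ chains of $A$. This fails for two reasons:
\begin{enumerate}
\item Which candidates of $B$ end up as $\zeta$ chains is a $\Pi^0_2$ fact, so no pointer makes them primitive recursively, or even computably, accessible.
\item Even given such a $b$, all $\zeta$ chains are alike, so some isomorphism has $h(c_0)=b$, i.e.\ $h^{-1}(b)=c_0$. Nothing is forced to be large.
\end{enumerate}
Also, as written, extending a candidate leftwards only while an inner limit is still changing extends it finitely often, so it ends up an $\omega$ chain anyway.

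The paper reverses the direction. The standard $\zeta$ chains $c_{i,\cdot}$ are in $B$. $A$ contains chains $d_{s,\cdot}$ that are grown leftwards at every stage where the $\Pi^0_2$ predicate $P(x,s)$ fires for some $x\le s$. Here $P(x,s)$ says ``$s=s_x$'', normalised so that $s_x$ is unique and strictly increasing in $x$. Thus $d_{s,\cdot}$ is a $\zeta$ chain iff $s\in\{s_0<s_1<\cdots\}$, and $h^{-1}(c_{0,0}),\dots,h^{-1}(c_{x,0})$ lie in $x+1$ distinct such chains. The information is the \emph{label} $s$ of the chain hit, read off from the index, not the size of an index. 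Pigeonhole gives a label $s^*\ge s_x$.

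Finally, one number cannot play both roles as in your read-off $g^*(x,s,s)$. The inner modulus of column $s$ may exceed $s$ for every $s$; otherwise $g$ would be $\Delta^0_2$. So the paper decodes in two rounds. First it finds $s^*$ via $h^{-1}$. Then it computes $h(a_{i,0})$ for $i\le\langle x,s^*\rangle$ to get $t^*\ge t_{x,s^*}$, and sets $g(x)=g^*(x,s^*,t^*)$.
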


\begin{proof}
Let $A$ be a relatively $\Delta_{3}^{0}$-categorical, not-$\Delta^0_2$-categorical injection structure with a punctual presentation. Just as before, we focus only on the $\omega$ chains and $\zeta$ chains, enumerating finite orbits into our structures $A=(\omega,f_{A})$ and $B=(\omega,f_{B})$ only when they are shown to close in the given punctual copy. Let $g$ be a $\Delta_{3}^{0}$ function and let $g^{*}(x,s,t)$ be a primitive recursive approximation to $g$; $\lim_{s}\lim_{t}g^{*}(x,s,t)=g(x)$. The strategy to encode $g$ will be split into two parts. The first is to encode for each pair $(x,s)$, the stage $t_{x,s}$ such that $g^{*}(x,s,t)=g^{*}(x,s,t_{x,s})$ for all $t\geq t_{x,s}$. The second is to encode the stage $s_{x}$ for which $\lim_{t}g^{*}(x,s_{x},t)=g(x)$. Then using the isomorphisms $h:A\to B$ and $h^{-1}:B\to A$, we recover both $s_{x}$ and $t_{x,s_{x}}$ respectively, and compute $g^{*}(x,s_{x},t_{x,s_{x}})$.

\

\emph{Encoding $t_{x,s}$:} In $A$, for each $(x,s)$, we keep standard $\omega$ chains with left-most elements $a_{\langle x,s\rangle,0}$. The idea is to encode $t_{x,s}$ using $h(a_{\langle x,s\rangle,0})$. Intuitively, this means that we should keep the indices of the left-most elements of each $\omega$ chain in $B$ `large enough'. In $B$, we begin with $\omega$ chains with left-most elements $b_{0,0},b_{1,0},b_{2,0},\dots$ respectively. At each stage $t$ of the construction, we track the left-most elements of each of these $\omega$ chains in $B$, denoted by $b^{l}_{0,t}$. For each $i$, let $b^{l}_{i,0}=b_{i,0}$. At stage $t$, if $g^{*}(x,s,t)\neq g^{*}(x,s,t-1)$, then for each $i\geq\langle x,s\rangle$, enumerate some element $b^{*}$ with some index $\geq t$ and define $f_{B}(b^{*})=b^{l}_{i,t-1}$ and let $b^{l}_{i,t}=b^{*}$. Since $\lim_{t}g^{*}(x,s,t)$ exists for each $(x,s)$, there must be some stage $t^{*}$ for which $\lim_{t}g^{*}(x',s',t)=g^{*}(x',s',t^{*})$ for each $x'\leq x$ and $s'\leq s$. After such a stage $t^{*}$, $b^{l}_{i,t}$ for each $i\leq\langle x,s\rangle$ never again changes. Furthermore, $b^{l}_{\langle x,s\rangle,t^{*}}$ must have index $\geq t^{*}$.

\

\emph{Encoding $s_{x}$:} Let $P(x,s)$ be the predicate ``$\lim_{t}g^{*}(x,s,t)=g(x)$''. Since this is equivalent to ''for all $s'\geq s,\,\lim_{t}g^{*}(x,s,t)=\lim_{t}g^{*}(x,s',t)$'', $P(x,s)$ is a $\Pi_{2}^{0}$ predicate. We may further assume that there is a unique $s$ for which $P(x,s)$ holds by requiring $s\geq x$ to be the least, and that for any $x<y$, if $P(x,s_{x})$ and $P(y,s_{y})$ are both true, then $s_{x}<s_{y}$. To keep the strategies disjoint, we encode the various $s_{x}$ using the infinitely many $\zeta$ chains. In $B$, we enumerate standard copies of $\zeta$ chains, indexed by $c_{i,j}$ for $i,j\in\omega$, defining $f_{B}(c_{i,1})=c_{i,0}$, $f_{B}(c_{i,2j})=c_{i,2(j+1)}$ and $f_{B}(c_{i,2j+3})=c_{i,2j+1}$. Within $A$, let the possible $\zeta$ chains be encoded by $d_{i,j}$ for $i,j\in\omega$. By a careful definition of $f_{A}$, for a fixed $s$, the collection of $d_{s,j}$ becomes a $\zeta$ chain iff $P(0,s)\vee P(1,s)\vee\dots\vee P(s,s)$ holds. Intuitively, we `grow' the chain of $d_{s,j}$ to the left only at stages where one of $P(x,s)$ currently holds for some $x\leq s$. Since $P(x,s)$ is $\Pi_{2}^{0}$, if it holds, then it must look to be true at infinitely many stages. Which means that at infinitely many stages, we extend the chain to the left, resulting in a $\zeta$ chain. Conversely, if there are infinitely many stages where we extend the chain containing $d_{s,0}$ to the left, at least one of $P(0,s),P(1,s),\dots,P(s,s)$ must look to hold at infinitely many stages. That is to say, one of $P(x,s)$ is true for $x\leq s$. To recover $s_{x}$, we compute $h^{-1}(c_{i,0})$ for sufficiently many $i$, and we know that such an image must map to some $d_{s,j}$ contained in a $\zeta$ chain. Using pigeonhole principle, we then conclude that one such $d_{s,j}$ must have $s\geq s_{x}$.

\

\emph{Construction:} We summarise the various types of elements in $A$ and $B$ below.
\begin{center}
\begin{tabular}{l|l}
    Element type & Purpose \\ \hhline{=|=}
    $a_{i,j}\in A$, for all $i,j\in\omega$. & Standard $\omega$ chains; encoding $t_{x,s}$ where $i=\langle x,s\rangle$.\\
    $b_{i,j}\in B$, for all $i\in\omega$ and all $j\geq i$. & Non-standard $\omega$ chains to be images for $h(a_{i,0})$.\\ \hline
    $c_{i,j}\in B$, for all $i,j\in\omega$. & Standard $\zeta$ chains; encoding $s_{x}$ where $i=\langle x,s\rangle$.\\
    $d_{i,j}\in A$, for all $i,j\in\omega$. & $\zeta$ chain iff there exists $x\leq i$ such that $P(x,i)$ holds.
\end{tabular}
\end{center}
At stage $0$, enumerate $a_{0,0},d_{0,0}$ into $A$, $b_{0,0},c_{0,0}$ into $B$. Define $b^{l}_{0,0}=b^{r}_{0,0}=b_{0,0}$ and $d^{l}_{0,0}=d^{r}_{0,0}=d_{0,0}$. At stage $s$, suppose that we have already defined for each $i<s$, $b^{l}_{i,s-1},b^{r}_{i,s-1},d^{l}_{i,s-1}$, and $d^{r}_{i,s-1}$.
\begin{description}
    \item[Step 1] Enumerate $a_{s,j}$ for each $j<s$ and $a_{i,s}$ for each $i\leq s$ into $A$. Then define $f_{A}(a_{s,j})=a_{s,j+1}$ for each $j<s$ and define $f_{A}(a_{i,s-1})=a_{i,s}$ for each $i\leq s$. This ensures that at each stage $s$, $a_{i,j}$ for all $i,j\leq s$ has been enumerated and that $f_{A}$ is primitive recursive on all such elements.

    \item[Step 2] Enumerate $b_{s,s}$ into $B$ and define $b^{l}_{s,s}=b^{r}_{s,s}=b_{s,s}$. For each $i<s$, let $j_{i}$ be the least index such that $j_{i}>j$ for any $b_{i,j}$ currently enumerated into $B$. If there is some $\langle x,n\rangle=i<s$ and $g^{*}(x,n,s)\neq g^{*}(x,n,s-1)$, then enumerate $b_{k,j_{k}}$ and $b_{k,j_{k}+1}$ into $B$ for all $k$ such that $i\leq k<s$, defining
    \begin{itemize}
        \item $b^{r}_{k,s}=f_{B}(b^{r}_{k,s-1})=b_{k,j_{k}+1}$,
        \item $b^{l}_{k,s}=b_{k,j_{k}}$, and
        \item $f_{B}(b_{k,j_{k}})=b^{l}_{k,s-1}$.
    \end{itemize}
    Otherwise, only enumerate $b_{i,j_{i}}$ into $B$, define $b^{r}_{i,s}=f_{B}(b^{r}_{i,s-1})=b_{i,j_{i}}$ and $b^{l}_{i,s}=b^{l}_{i,s-1}$.

    \item[Step 3] Enumerate $c_{s,2j},c_{s,2j+1}$ for each $j<s$ and $c_{i,2s},c_{i,2s+1}$ for each $i\leq s$ into $B$. Now define $f_{B}(c_{i,2(s-1)})=c_{i,2s}$, and $f_{B}(c_{i,2s+1})=c_{i,2s-1}$ for each $i<s$, and
    $$
    c_{s,2s+1}\xrightarrow{f_{B}}c_{s,2s-1}\xrightarrow{f_{B}}\dots\xrightarrow{f_{B}}c_{s,1}\xrightarrow{f_{B}}c_{s,0}\xrightarrow{f_{B}}c_{s,2}\xrightarrow{f_{B}}\dots\xrightarrow{f_{B}}c_{s,2s}.
    $$
    At each stage $s$, we have that $B$ contains $c_{i,j}$ for each $i\leq s$ and each $j<2(s+1)$, and $f_{B}$ is primitive recursive on all such elements.

    \item[Step 4] Enumerate $d_{s,0}$ into $A$, and define $d^{l}_{s,s}=d^{r}_{s,s}=d_{s,0}$. For each $i<s$, let $j_{i}$ be the least index such that $j_{i}>j$ for any $d_{i,j}$ currently enumerated into $A$. If there is some $x\leq i<s$ such that $P(x,i)$ \emph{fires} (looks to be true at the current stage), then enumerate $d_{i,j_{i}}$ and $d_{i,j_{i}+1}$ into $A$, and define the following.
    \begin{itemize}
        \item $d^{r}_{i,s}=f_{A}(d^{r}_{i,s-1})=d_{i,j_{i}+1}$,
        \item $d^{l}_{i,s}=d_{i,j_{i}}$, and
        \item $f_{A}(d_{i,j_{i}})=d^{l}_{i,s-1}$.
    \end{itemize}
    Otherwise, only enumerate $d_{i,j_{i}}$ into $A$, and define $d^{r}_{i,s}=f_{A}(d^{r}_{i,s-1})=d_{i,j_{i}}$. Also let $d^{l}_{i,s}=d^{l}_{i,s-1}$.
\end{description}
Whenever a finite orbit in the given punctual injection structure is revealed, we also enumerate a finite orbit of the same size into the structures $A$ and $B$.

\

\emph{Verification:} We make the following observations.
\begin{enumerate}[label=(\roman*)]
    \item For any $x,s$, the collection $\{b_{\langle x,s\rangle,j}\mid j\geq\langle x,s\rangle\}$ forms an $\omega$ chain with left-most element $b_{\langle x,s\rangle,t}$, where for all $t'\geq t$ and for any $\langle x',s'\rangle\leq\langle x,s\rangle$, $g^{*}(x',s',t')=g^{*}(x',s',t)$. (Note that $b_{\langle x,s\rangle,j}$ for $j<\langle x,s\rangle$ is undefined as they are never enumerated into $B$.)

    For a fixed pair $x,s$, since $\lim_{t}g^{*}(x,s,t)$ is assumed to exist, then there must be some finite stage $t_{x,s}$ such that for all $t\geq t_{x,s},\,g^{*}(x,s,t_{x,s})=g^{*}(x,s,t)$. This implies that for all $t>\max\{t_{x',s'}\mid\langle x',s'\rangle\leq\langle x,s\rangle\}$, we always define $b^{l}_{\langle x,s\rangle,t+1}=b^{l}_{\langle x,s\rangle,t}$ and $f_{B}^{-1}(b^{l}_{\langle x,s\rangle,t})\uparrow$. Thus, there is some finite stage $t$ after which the left-most element in the collection $\{b_{\langle x,s\rangle,j}\mid j\geq\langle x,s\rangle\}$ never again changes. That is, the collection forms an $\omega$ chain. From the argument above and an analysis of the actions in Step 2, it is easy to conclude that the index $j\in\omega$ for which $b_{\langle x,s\rangle,j}=b^{l}_{\langle x,s\rangle,t}$ is such that $j\geq t_{x',s'}$ for any $\langle x',s'\rangle\leq\langle x,s\rangle$. Therefore, (i) is true.

    \item For any $s\in\omega$, the collection $\{d_{s,j}\mid j\in\omega\}$ forms a $\zeta$ chain iff there exists some $x\leq s$ such that $P(x,s)$ fires at infinitely many stages.

    To see why (ii) holds, we turn our attention to Step 4 of the construction. First suppose that for all $x\leq s$, $P(x,s)$ fires only finitely often. In particular, there is some finite stage $s^{*}$ large enough such that after $s^{*}$, $P(x,s)$ never again fires for all $x\leq s$. An analysis of Step 4 in the construction will allow one to conclude that for all $s'\geq s^{*}$, $d^{l}_{s,s'}=d^{l}_{s,s^{*}}$, and that $f_{A}^{-1}(d^{l}_{s,s^{*}})\uparrow$. In other words, the collection $\{d_{s,j}\mid j\geq s\}$ forms an $\omega$ chain.

    On the other hand, if there is some $x\leq s$ for which $P(x,s)$ fires infinitely often, then let these stages be denoted by $s_{0}<s_{1}<s_{2}<\dots$. Obviously, at all such stages, we would have defined $d^{l}_{s,s_{i}}=f_{A}^{-1}(d^{l}_{s,s_{i}-1})$, meaning that the collection $\{d_{s,j}\mid j\in\omega\}$ has no left-most element. Furthermore, as described in Step 4, we always define $d^{r}_{s,s'}=f_{A}(d^{r}_{s,s'-1})$, thus also ensuring that $\{d_{s,j}\mid j\in\omega\}$ has no right-most element. Thus, $\{d_{s,j}\mid j\in\omega\}$ forms a $\zeta$ chain iff there is some $x\leq s$ for which $P(x,s)$ fires infinitely often.
\end{enumerate}
To see that $A$ and $B$ are of the correct isomorphism type, it suffices to show that they consist of infinitely many $\omega$ chains and infinitely many $\zeta$ chains, as the finite orbits are enumerated whenever they are observed to close in the given punctual copy. Step 3 of the construction ensures that $B$ consists of $\zeta$ chains of the form $\{c_{i,j}\mid j\in\omega\}$ for each $i\in\omega$. By applying (i), we may also conclude that $B$ contains infinitely many $\omega$ chains given by $\{b_{i,j}\mid j\geq i\}$ for each $i\in\omega$. In fact, it is not difficult to see that these are exactly all the $\omega$ chains in $B$.

In $A$, it is easy to see that there are infinitely many $\omega$ chains, given by $\{a_{i,j}\mid j\in\omega\}$ for each $i\in\omega$. Since $g(x)$ is a total $\Delta_{3}^{0}$ function, for each $x\in\omega$, there must be some $s_{x}\in\omega$ such that $P(x,s_{x})$ fires infinitely often. Applying (ii) from above, we may then obtain that the chains $\{d_{s_{x},j}\mid j\in\omega\}$ for each $x\in\omega$ are all $\zeta$ chains. Thus, $A$ contains both infinitely many $\omega$ chains and also infinitely many $\zeta$ chains. We also note here that as a consequence of (ii), the only $\zeta$ chains in $A$ are of the form $\{d_{s_{x},j}\mid j\in\omega\}$ for some $x\in\omega$.

Let $h:A\to B$ be an isomorphism. To recover $g(x)$, consider the following procedure. Compute $h^{-1}(c_{i,0})$ for each $i\leq x$. Since $h$ is an isomorphism, we obtain that $h^{-1}(c_{i,0})$ must also be contained in some $\zeta$ chain of $A$. Recall that the only $\zeta$ chains of $A$ are of the form $\{d_{s_{y},j}\mid j\in\omega\}$ for some $y\in\omega$. By pigeonhole principle, at least one of $h^{-1}(c_{i,0})=d_{s,j}$ for some $j$ and $s$ where $s\geq s_{x}$. Let $s^{*}$ be the maximum of all such $s$ obtained from $h^{-1}(c_{i,0})$. Clearly, $s^{*}$ has the property that $\lim_{t}g^{*}(x,s^{*},t)=\lim_{t}g^{*}(x,s',t)$ for any $s'\geq s^{*}$. Or equivalently, $\lim_{t}g^{*}(x,s^{*},t)=\lim_{s}\lim_{t}g^{*}(x,s,t)$.

Given $s^{*}$ from above, compute $h(a_{i,0})$ for all $i\leq\langle x,s^{*}\rangle$. Each of these images must be the left-most elements of $\omega$ chains in $B$. Since $B$ only contains $\omega$ chains of the form $\{b_{\langle y,s\rangle,j}\mid j\geq\langle y,s\rangle\}$, we have that each $h(a_{i,0})=b_{\langle y,s\rangle,t}$ for some $y,s,t\in\omega$. Pick $t^{*}$ with the following properties.
    \begin{itemize}
        \item $h(a_{i^{*},0})=b_{\langle y,s\rangle,t^{*}}$ for some $i^{*}\leq\langle x,s^{*}\rangle$, and
        \item for any other $b_{\langle y',s'\rangle,t'}=h(a_{i,0})$, $\langle y',s'\rangle\leq\langle y,s\rangle$.
    \end{itemize}
    By pigeonhole principle, it must be that $\langle x,s^{*}\rangle\leq\langle y,s\rangle$, and by (i), $t^{*}$ has the property that for any $t\geq t^{*},\,g^{*}(x,s^{*},t^{*})=g^{*}(x,s^{*},t)$. In conclusion, $g^{*}(x,s^{*},t^{*})=\lim_{t}g^{*}(x,s^{*},t)=\lim_{s}\lim_{t}g^{*}(x,s,t)$. Since the procedure described above is clearly primitive recursive, $g\leq_{pr}h\oplus h^{-1}$, and thus, $\prcat(A)\subseteq\cone(\Delta_{3}^{0})$. Theorem~\ref{thm:d3inj} is proved.
\end{proof}

\section{A pathological injection structure}\label{sec:pathological}

\begin{theorem}\label{thm:inj}
    There exists a structure $A$, computably categorical but not punctually categorical, such that $\prcat(A)\nsubseteq\cone(\Delta_{1}^{0})$.
\end{theorem}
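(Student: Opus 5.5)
We will take $A$ to be an injection structure with no infinite orbits and only finitely many orbits of each size. By the discussion after Theorem~\ref{thm:d1inj}, this is the only place a counterexample can live. Such a structure is computably categorical by \cite[Corollary 2.5]{chr14}. The plan is to choose the set $S$ of cycle sizes, each occurring exactly once, so that two things happen: any two punctual copies are isomorphic via a map $f$ with $f\oplus f^{-1}\le_{PR}\mathbf{d}$ for some fixed PR-degree $\mathbf{d}$, and some total computable function is not $\le_{PR}\mathbf{d}$. Then $\mathbf{d}\in\prcat(A)\setminus\cone(\Delta_1^0)$.

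\textbf{Choice of $S$.} We take $S=\{n_0<n_1<\dots\}$ to grow so fast that a punctual copy is forced to reveal its cycles in a controlled way. The point is that in a punctual copy $P$, the element with index $k$ is enumerated by some primitive-recursively bounded stage. Its whole orbit must therefore be closed within a number of steps bounded by the size of the cycle. So computing the orbit of element $k$ costs at most about $n_i$ applications of the primitive recursive function $f_P$, where $n_i$ is the size of its cycle.

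We let $G(i)=n_i$ and let $\mathbf{d}=\deg_{PR}(\hat G)$ for a suitable fast-growing, total computable function $\hat G$ that dominates the relevant search. Given $k\in P$, we iterate $f_P$ on $k$ up to $\hat G(k)$ times. We must ensure that the orbit of $k$ closes within this bound. Since $P$ lists $\omega$ and each cycle size appears only once, the element $k$ lies in a cycle of size $n_i$ with $i\le k$, because at most $k$ cycles precede it. So $\hat G(k)\ge n_k$ suffices to find the orbit of $k$. We then map it to the unique orbit of the same size in $Q$. To find that orbit in $Q$, we search elements $m$ of $Q$ in increasing order, computing their orbits with the same bound, until we hit a cycle of size $n_i$. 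The search for $m$ must itself be bounded primitive recursively in $\hat G$.

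This is the main obstacle. A punctual $Q$ may delay listing its size-$n_i$ cycle, and this delay is not bounded by anything structural. The fix is to make the sizes sparse and the structure rigid enough that every punctual copy must list the cycle of size $n_i$ before index $\hat G(i)$, for a fixed $\hat G$. We will argue this by choosing $n_{i+1}$ huge relative to $n_i$: while waiting to show the $n_i$-cycle, $Q$ must spend indices on larger cycles. There are only finitely many of those below any given size, and each large cycle already consumes many indices, so delaying is costly. We would then bound the position of the $n_i$-cycle in terms of $n_{i+1}$ and derive the bound as a primitive recursive function of $\hat G$. If this argument fails as stated, we would instead diagonalize: we build $S$ by a priority construction against all pairs of primitive recursive presentations (indexed by $e$), inserting gaps so that a single computable $\hat G$ bounds the search for every copy.

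\textbf{Showing $\mathbf{d}\notin\cone(\Delta_1^0)$.} We choose $\hat G$ total computable but deliberately so that some computable function escapes it, for instance the function $x\mapsto \hat G'(x)$ that grows faster than every $\Psi_e^{\hat G}$. This is possible because the family $\{\Psi_e^{\hat G}\}_e$ is uniformly computable when $\hat G$ is, so we can diagonalize: set $g(e)=\Psi_e^{\hat G}(e)+1$. We also verify that $A$ is not punctually categorical. Two punctual copies, one listing cycles slowly, differ by a non-primitive-recursive isomorphism, using the same delay trick as in Theorem~\ref{thm:d1inj}, with large cycles playing the time-wasting role.
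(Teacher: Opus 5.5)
There is a genuine gap, and it is the step you flag as the main obstacle. Everything rests on a single total \emph{computable} $\hat G$, fixed in advance, such that every punctual copy lists its $n_i$-cycle by index roughly $\hat G(i)$, even allowing a primitive recursive reindexing and finitely many exceptions. You give no argument for this. The heuristic that ``delaying is costly'' is backwards: punctuality only constrains how fast elements appear, and a copy can fill any number of indices with (pieces of) larger cycles. Your own last paragraph relies on exactly this.

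Once a punctual copy can postpone chosen cycles until arbitrary computable events occur, any computable $\hat G$ fails. Choose elements $a_e$ in distinct cycles of $A$. Because $\hat G$ is computable, the values $\Psi_e^{\hat G}(a_e)$ are uniformly computable. Let $Q$ postpone the cycle of $a_e$ until $Q$ already has more than $\Psi_e^{\hat G}(a_e)$ elements. Since each size occurs once, no $\Psi_e^{\hat G}$ is then an isomorphism onto $Q$, so $\deg_{PR}(\hat G)\notin\prcat(A)$. Hence the two halves of your proposal cannot both be right.

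Concretely, take $S$ as you describe: sparse, each size once, with $i\mapsto n_i$ primitive recursive. Let $Q$ list the $n_{2x}$-cycle only after $g(x)$ has converged, listing the odd-indexed cycles in order meanwhile. As in Theorem~\ref{thm:d1inj}, this gives $\prcat(A)=\cone(\Delta^0_1)$, so this $A$ is not a counterexample at all. Two smaller points: the claim that element $k$ lies in a cycle $n_i$ with $i\le k$ is false, since a copy may list a cycle of any size first. And the fallback ``diagonalize so that one computable $\hat G$ works'' only restates what has to be proved.

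The missing idea is the paper's. $A$ is not a fixed sparse structure; it is built by a priority construction together with the oracle $d$, and $d$ is $\Delta^0_2$, not computable. In the phase for $P_i$, $A$ lists only orbits of the current size $n_i$, many of them. Any $B_e$ that stays a substructure of $A$ is therefore \emph{pressed} to list orbits of size $n_i$ at a primitive recursive pace, and $d$ records those stages. If $B_e$ over-lists some size $\ge n_e$, that size is abandoned (acting for $Q_e$).

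The non-computability sits in the special entries $d(s_j)$. They record when $B_k$ has caught up with $A[s_j]$ (and $A[t_{j+1}]$), a c.e.\ event that may never occur, e.g.\ when $B_k\not\cong A$. These entries let $d$ bound primitive recursively where the older orbits went.

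Finally, $\deg_{PR}(d)\notin\cone(\Delta^0_1)$ comes from the requirements $P_e:\Psi_e^d\neq g$ for a fixed computable non-primitive-recursive $g$. Apart from finitely many special entries, $d$ is primitive recursive within each phase, so pressing can continue until $\Psi_e^d$ disagrees with $g$. Changes to special entries cause only finite injury. Your diagonalization $g(e)=\Psi_e^{\hat G}(e)+1$ is fine for a computable $\hat G$, but that was never the hard part.
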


We shall construct our structure to be a punctual injection structure. Recall from Section \ref{sec:inj} that this injection structure necessarily has no infinite orbits, and no orbit sizes repeated infinitely often. The intuitive idea for the proof would be to use some pseudo-`pressing' strategy to ensure that the `opponents' copy our structure as much as possible. Where such `pressing' is impossible, we instead rely on our function oracle to compute the isomorphism.

\begin{proof}
Fix some computable but not primitive recursive function $g$, let $\{\Psi_{e}\}_{e\in\omega}$ be a list of all primitive recursive schemes, and let $\{B_{e}\}_{e\in\omega}$ be a list of all punctual structures in the language of one unary function symbol. We construct $d$, a $\Delta_{2}^{0}$ function oracle, and a punctual injection structure $A=(\omega,f_{A})$, satisfying the following requirements. 
\begin{align*}
    P_{e}:&\,\Psi_{e}^{d}\neq g.\\
    Q_{e}:&\,d\geq_{pr}q_{e}\oplus q_{e}^{-1}\text{ where }q_{e}:A\to B_{e}\text{ is an isomorphism, or }A\not\cong B_{e}.
\end{align*}

\

\emph{Strategy for one $Q_{e}$ requirement:} Fix some punctual structure $B_{e}$ with a single unary function. If such a function is ever discovered to not be injective, then $Q_{e}$ is trivially satisfied. We may thus assume that $B_{e}$ is an injection structure.

When the construction begins, we enumerate only finite orbits of size $n$. Although $B_{e}$ is punctual and should enumerate its elements `quickly', we note that arbitrary $B_{e}$ could still be much `slower' than the structure $A$ defined by us. The intention here is obviously to use $d$ to encode this delay. More specifically, since we only enumerate finite orbits of size $n$, if $B_{e}$ ever shows some orbit not of size $n$ (or more generally if $B_{e}$ is not a substructure of $A$), then we will never enumerate such a size into our structure $A$, ensuring that $A\not\cong B_{e}$. Thus, $B_{e}$ should eventually enumerate an orbit of size exactly $n$. Once it does so, we define $d(0)$ to be the stage number at which this happens. The intention here is that $d(j)$ encodes the stage number at which $B_{e}$ enumerates the $j^{th}$ orbit enumerated into $A$.

\

\emph{Strategy for one $Q_{e}$ and infinitely many $P_{e}$:} Fix some infinite sequence of sizes $n_{0}<n_{1}<n_{2},\dots$, and begin the strategy for $Q_{e}$ as described previously. Enumerate only orbits of size $n_{0}$ and define $d(j)$ to be the stages at which $B_{e}$ enumerates a suitable $q_{e}$-image for the $j^{th}$ orbit in $A$. Repeat this process until $\Psi_{0}^{d}\neq g$ is witnessed. Since $g$ is not primitive recursive and $d$ currently contains only primitive recursive information, $\Psi_{0}^{d}\neq g$ must be witnessed at some finite stage $s_{0}$. Once such a stage is reached, we define $d(s_{0})[s_{0}]=0$. The idea is that $d(s_{0})$ should encode a stage large enough such that $B_{e}$ contains $A[s_{0}]$ as a substructure. As long as $A[s_{0}]$ is not a substructure of $B_{e}$, $d(s_{0})$ remains at $0$.

Suppose that for all $j<i$, we have defined $s_{j}$, and $\Psi_{j}^{d}\neq g$ is witnessed by the initial segment $d\restriction s_{j}$. We now begin enumerating only orbits of size $n_{i}$ into our structure $A$. Just as before, if it is discovered that $B_{e}$ is not a substructure of $A$, then $Q_{e}$ will be satisfied by keeping $A\not\cong B_{e}$. Similarly, $d(s_{i-1}+k)$ will encode the stage at which the $k^{th}$ orbit of size $n_{i}$ is enumerated into $B_{e}$. Continue this process until it is discovered that $\Psi_{i}^{d}\neq g$. Since $d$ consists of only primitive recursive information (with finitely many exceptions, i.e., $d(s_{j})$ for each $j<i$) and $g$ is not primitive recursive, $\Psi_{i}^{d}\neq g$ must be witnessed at some finite stage $s_{i}$. We may then recursively repeat this procedure as described above, defining $d(s_{i})[s_{i}]=0$ and enumerating orbits of size $n_{i+1}$ into $A$.

However, for $i>0$, the failure of $\Psi_{i}^{d}$ to compute $g$ is witnessed by the initial segment $d\restriction s_{i}$. In particular, this initial segment includes $d(s_{j})$ for $j<i$. At stages $s$ where $A[s_{j}]$ is discovered to be a substructure of $B_{e}$, the value of $d(s_{j})$ changes from $0$ to $s$, possibly changing the computation in such a way that $\Psi_{i}^{d}$ now looks to be equal to $g$. When this happens, we initialise all requirements $P_{k}$ for $k>j$ and return to enumerating only orbits of size $n_{j+1}$ and restarting our definition of $s_{k}$ for all $k>j$. Clearly, each $P_{i}$ can only be initialised if $d(s_{j})$ changes for some $j<i$, and thus each $P_{i}$ will be initialised only finitely often.

\

\emph{The general strategy:} We order the requirements $Q_{0},P_{0},Q_{1},P_{1},\dots$. During the construction, we maintain $n_{0}<n_{1}<\dots$ and $s_{0}<s_{1}<\dots$ as distinct sets of \emph{markers} with the usual rules; whenever $n_{i}$ or $s_{i}$ changes, all $n_{j}$ or $s_{j}$ for $j>i$ will be \emph{kicked} to some value $>n_{i}$ or $>s_{i}$ respectively. As described earlier, we begin by enumerating orbits of size $n_{0}$ into $A$ and record the stages at which these orbits are enumerated into $B_{0}$. At each stage $s$ of the construction, as long as $\Psi_{0}^{d\restriction s}=g$, we kick $s_{i}$ for all $i\geq 0$ and define $s_{0}=s+1$. Once $\Psi_{0}^{d\restriction s}\neq g$ is witnessed, let $s_{0}=s$ and define $d(s_{0})=0$.

Suppose now that we have currently witnessed $\Psi_{j}^{d\restriction s_{j}}\neq g$ for each $j<i$. We will attend to $Q_{0},Q_{1},\dots,Q_{i}$ and $P_{i}$ in subsequent stages. Similar to before, we enumerate only orbits of size $n_{i}$ while waiting for the stage $s$ such that $\Psi_{i}^{d\restriction s}\neq g$. As long as $\Psi_{i}^{d\restriction s}=g$, we kick $s_{k}$ for all $k\geq i$ at stage $s$. For inputs $x>s_{i-1}$, we use $d(x)$ to record the stage at which the $x-s_{i-1}$-th orbit of size $n_{i}$ is enumerated into the structures $B_{0},B_{1},\dots,B_{i}$. More specifically, we define $d(s_{i-1}+k)$ to be an $(i+1)$-tuple, where each entry is the stage at which $B_{j}$ enumerates the $k^{th}$ ($>0$) orbit of size $n_{i}$ for each $j\leq i$. We continue this process until the first stage $s$ where one of the following happens.
\begin{itemize}
    \item If $\Psi_{i}^{d\restriction s}\neq g$, then define $d(s_{i})[s]$ to be the all zero $i$-tuple, and proceed to attend to $Q_{0},Q_{1},\dots,Q_{i+1}$, and begin the process to find $s_{i+1}$.
    
    \item For some $j<i$, $A[s_{j}]$ is discovered to be a substructure of $B_{k}$ for some $k\leq j$ at stage $s$. At such a stage $s$, change the $k^{th}$ entry of $d(s_{j})$ to $s$ (recall that $d(s_{j})$ is a $j$-tuple), define $s_{j+1}=s+1$ and initialise the requirements $P_{j'},Q_{j'}$ for all $j'>j$. Then return to attending to $Q_{0},Q_{1},\dots,Q_{j+1}$ and restart the process to find $s_{j+1}$.

    \item There is an additional point to note when we have multiple $Q_{e}$ requirements. Recall that for a single $Q_{e}$, if for some $k$, $B_{e}$ reveals more orbits of size $k$ than $A$ currently contains, then we are able to permanently keep $A\not\cong B_{e}$. However, the different $Q_{e}$ might possibly interact in a way that `damages' our strategy of recovering an isomorphism from $d$.
    
    In particular, the information we have encoded into $d$ is not quite sufficient; we need to be able to compute where each structure $B_{e}$ places the orbits of size $k$ for $n_{j}<k<n_{j+1}$ (which might have been enumerated into $A$ before $k$ is restrained from $A$). Thus, we shall also track the stages $t_{e}$ at which we `act' for $Q_{e}$ (see the construction for the definition). If we never do so, then let $t_{e}\uparrow$. Otherwise, $d(s_{j})$ will also encode the stage at which $A[t_{j+1}]$ becomes a substructure of $B_{k}$ for various $k\leq j$.
\end{itemize}
Intuitively, the first case is when we manage to successfully define $s_{i}$ without any changes to $d\restriction s_{i-1}$. The latter cases are where we make a change in $d\restriction s_{i-1}$ before finding $s_{i}$. When this happens, pick the maximum $s_{j}$ for which $d\restriction s_{j}$ did not change, and return to the strategy for attending to the requirements with indices $\leq j+1$. We may then restart the definition of $d(x)$ for $x>s_{j}$. Recall that $d(x)$ on all such $x$ was defined for the purpose of the strategy of $Q_{j+1}$ anyways, and can thus be initialised.

\

\emph{Construction:} Each stage of the construction could possibly last for a non-primitive recursive amount of time. During the construction, we also track sub-stages; stages of primitive recursive time at which we enumerate new elements into $A$.

At stage $0$, define $n_{i}[0]=i+1$ and $s_{i}=i$ for each $i\in\omega$. Begin enumerating orbits of size $n_{0}[0]$ into $A$, and do the following.
\begin{enumerate}[label=(\roman*)]
    \item If $B_{0}$ remains a substructure of $A$, then within primitive recursive time, $B_{0}$ must enumerate the first orbit of size $n_{0}[0]$. When it does so, we record the sub-stage at which this happens using $d(0)$.
    
    \item If $B_{0}$ is not a substructure of $A$, then define $d(0)=0$. Since $n_{0}$ is currently $1$, and $A$ only contains orbits of size $n_{0}[0]$, in order for $B_{0}$ to not be a substructure of $A$, one of the following must happen.
    \begin{itemize}
        \item $B_{0}$ enumerates an orbit which has yet to close. By ensuring that our structure has no infinite orbits, $B_{0}$ can only be isomorphic to $A$ if all orbits of $B_{0}$ are also finite. Thus, any orbit enumerated by $B_{0}$ has to eventually close. That is, we need not consider orbits which do not close.

        \item $B_{0}$ enumerates a (closed) orbit of size $k>n_{0}[0]$. Then define $n_{0}[1]$ to be some number $>k$, kicking all subsequent $n_{j}$ for $j>0$. Throughout the construction, we shall only enumerate orbits of size $n_{i}[s]$ for some $i$ at each stage $s$ of the construction. This ensures that we shall never enumerate any orbit of size $k$ into $A$, as $k<n_{0}[1]\leq n_{0}[s]<n_{1}[s]<\dots$ for all $s>0$.

        \item $B_{0}$ enumerates more orbits of size $n_{0}$ than $A$. Then we similarly define $n_{0}[1]$ to be a fresh number. In subsequent stages of the construction, we will never enumerate any orbit of size $n_{0}[0]$.
    \end{itemize}
    If $n_{0}$ is defined to be some fresh number by an action above, then we say to have \emph{acted} for $Q_{0}$, and define $t_{0}$ to be the sub-stage at which we did so. 
\end{enumerate}
Once we have succeeded in defining $d(0)$, proceed to stage 1.

At stage $s$, we say that $Q_{i}$ or $P_{i}$ \emph{requires attention} if
\begin{itemize}
    \item we have never \emph{acted} for $Q_{i}$ (to be defined later), and there is some orbit of size $k\geq n_{i}[s]$, such that $B_{i}$ contains more orbits of size $k$ than $A$, or
    \item $\Psi_{i}^{d\restriction s}$ is currently equal to $g$ respectively.
\end{itemize}
In addition, if we have acted for $Q_{i}$, then let $t_{i}$ be the sub-stage at which we did so. At the beginning of stage $s$, for any $i\geq 0$ and $j\leq i$ such that $s_{i}<s$ and we have yet to act for $Q_{j}$, do the following.
\begin{itemize}
    \item If $t_{i+1}$ is defined, and the structures $A[s_{i}]$ and $A[t_{i+1}]$ are both substructures of $B_{j}$, then define the $j^{th}$ entry of $d(s_{i})$ to be the sub-stage at which this was first witnessed. Otherwise, let the $j^{th}$ entry of $d(x)$ for all $x\geq s_{i}$ be $0$.

    \item If $t_{i+1}$ is undefined, then we only check whether $A[s_{i}]$ is a substructure of $B_{j}$. If it is, then define the $j^{th}$ entry of $d(s_{i})$ to be the first sub-stage at which it was witnessed. Otherwise, define the $j^{th}$ entry of $d(x)$ for all $x\geq s_{i}$ to be $0$. 
\end{itemize}
Once all such changes are made, let $i$ be the least for which $Q_{i}$ or $P_{i}$ requires attention.

If it was $Q_{i}$ which required attention, then define $n_{i}[s+1]$ to be some fresh number, and proceed to stage $s+1$. We then say that we have \emph{acted} for $Q_{i}$ and define $t_{i}$ to be the current sub-stage.

If it was $P_{i}$ which required attention, then define $s_{i}=s+1$ and define $d(x)$ for all $s_{i-1}<x<s_{i}=s+1$ as follows. While attempting to define $d(x)$, we enumerate only orbits of size $n_{i}[s]$ into $A$. Let $j\leq i$ be given.
\begin{enumerate}[label=(\roman*)]
    \item Define the $j^{th}$ entry of $d(x)$ to be $0$ if one of the following holds.
    \begin{itemize}
        \item $B_{j}$ is currently not a substructure of $A$. To be more specific, this means that there is some finite size $k$ such that $B_{j}$ currently has more orbits of size $k$ than $A$ does.
        
        \item The $j^{th}$ entry of $d(y)$ is currently $0$ for some $y<x$. Intuitively, this means that $B_{j}$ currently does not look isomorphic to $A$.
        
        \item We have acted for $Q_{j}$ at some earlier stage. In fact, once we have acted for $Q_{j}$, let the $j^{th}$ entry of $d(x)$ for all $x\geq s_{j}$ be $0$.
    \end{itemize}
    In the following cases, we may thus assume that none of the above hold.

    \item Define the $j^{th}$ entry of $d(x)$ to be the sub-stage at which the $x-s_{i-1}$-th orbit of size $n_{i}[s]$ is enumerated into $B_{j}$. Since $B_{j}$ is punctual, it has to enumerate new elements primitive recursively. In addition, new orbits either grow in size quickly or close and reveal its final size. Applying this with the fact that there is only a fixed number of orbits of size $<n_{i}[s]$ currently in $A$, and we are only enumerating orbits of size $n_{i}[s]$ into $A$, after some primitive recursive delay, one of the following must happen.
    \begin{itemize}
        \item $B_{j}$ enumerates the $x-s_{i-1}$-th orbit of size $n_{i}[s]$. As mentioned earlier, we record the sub-stage at which this happens in the $j^{th}$ entry of $d(x)$.

        \item There is some $k\geq n_{j}[s]$ such that $B_{j}$ has more orbits of size $k$ than $A$ does. That is, $Q_{j}$ would require attention at the next stage, and if acted for, we claim that we never again enumerate any orbits of size $k$ (to be verified later).
        
        \item There is some $k<n_{j}[s]$ such that $B_{j}$ has more orbits of size $k$ than $A$ does. In this case, we cannot forcefully kick higher priority $n_{j'}[s]\leq k$ for the sake of $Q_{j}$. We instead ensure that each size appears only finitely often in $A$, meaning that we can non-uniformly fix $q_{j}^{-1}:B_{j}\to A$ on all orbits of size $<n_{j}$, allowing us to satisfy $Q_{j}$ via computing $q_{j}\oplus q_{j}^{-1}$ from $d$ (verified below). In the meantime, define the $j^{th}$ entry of $d(x)$ to be $0$.
    \end{itemize}
\end{enumerate}
Once we have successfully defined $d\restriction(s+1)$, then proceed to stage $s+1$.

\

\emph{Verification:} We first make some preliminary observations. Notice that each $n_{e}$ only changes if $n_{e'}$ for some $e'<e$ changes, or we acted for $Q_{e}$ during the construction. Since we only act at most once for each $Q_{e}$, it is easy to see that each $n_{e}$ stabilises eventually.

To see that $d$ is $\Delta_{2}^{0}$, observe that for each $x\in\omega$, and for each $s_{i}$, exactly one of $x>s_{i}$, or $x\leq s_{i}$ is true for cofinitely many stages of the construction. Note that this is simply a consequence of how each $s_{i}$ only increases when it is changed, and that we do not claim or require that $s_{i}$ stabilises here. For a given $x$, since $x=s_{x}[0]$, there exists a least $i$ for which $x\leq s_{i}$ for cofinitely many stages. As a result, $d(x)$ must be an $(i+1)$-tuple. For such an $i$, there is a stage $s^{*}$ such that $n_{i}$ never again changes after stage $s^{*}$. We may further assume inductively that $s^{*}$ is also such that $d\restriction x$ never changes after $s^{*}$. We argue that each entry of $d(x)$ stabilises. Fix some $j\leq i$ and consider the $j^{th}$ entry of $d(x)$.
\begin{itemize}
    \item If there is some $y<x$ such that the $j^{th}$ entry of $d(y)$ is $0$. Then the $j^{th}$ entry of $d(x)$ will also be defined to be $0$. Furthermore, by the assumption that $d\restriction x$ never again changes after stage $s^{*}$, the $j^{th}$ entry of $d(x)$ will forever remain at $0$.

    \item We may thus suppose that the $j^{th}$ entry of $d(y)$ is always $>0$ for each $y<x$. We further split into the cases where either $x<s_{i}$ or $x=s_{i}$. For $x<s_{i}$, recall that the $j^{th}$ entry of $d(x)[s]$ either records the sub-stage at which $B_{j}$ enumerates the $x-s_{i-1}$-th orbit of size $n_{i}[s]$, or records $0$ if $B_{j}$ was discovered to not be a substructure of $A$ at stage $s$. This means that there is some $k$ such that $B_{j}[s]$ contains more orbits of size $k$ than $A[s]$ does. By the assumption that we never act for $Q_{j}$ during the construction, it must be that $k<n_{j}$, otherwise $Q_{j}$ would require attention and be acted for, changing $n_{j}$ at some stage $s>s^{*}$ a contradiction. On the other hand, we only enumerate orbits of size $\geq n_{i}\geq n_{j}>k$ into $A$ after stage $s^{*}$; if $B_{j}$ ever enumerates more orbits of size $k$ than $A$ at stage $s>s^{*}$, it cannot possibly later become a substructure of $A$. In conclusion, the $j^{th}$ entry of $d(x)$ remains forever $0$ if $B_{j}$ is discovered to not be a substructure of $A$ at stage $s>s^{*}$.
    
    For $x=s_{i}$, the $j^{th}$ entry of $d(x)$ records one of the following.
    \begin{itemize}
        \item The sub-stage at which $A[s_{i-1}]$ becomes a substructure of $B_{j}$, or
        \item the sub-stage at which $A[t_{i}]$ becomes a substructure of $B_{j}$, provided $t_{i}$ is defined and larger than $s_{i-1}$, or
        \item $0$ otherwise.
    \end{itemize}
    It is easy to see that the first two points are c.e.~ events once $s_{i-1}$ has stabilised and $t_{i}$ has been defined if ever. Thus, $d(x)$ never changes after some finite stage.
\end{itemize}
In conclusion, we have that each $n_{e}$ eventually stabilises, and that $d$ is a $\Delta_{2}^{0}$ function. We are now ready to prove that $P_{e}$ and $Q_{e}$ are satisfied.

\

\emph{Each $P_{e}$ is satisfied:} We proceed by induction on $e\in\omega$ and show that $s_{e}$ eventually stabilises at some finite value. Let $e$ be the least such that $s_{e}$ changes infinitely often. From the construction, $s_{e}$ only changes if either $Q_{e}$ or $P_{e}$ requires attention or some $s_{e'}$ where $e'<e$ changes. Since $e$ is assumed to be the least for which $s_{e}$ changes infinitely, then we may assume that either $Q_{e}$ or $P_{e}$ requires attention infinitely often.
\begin{itemize}
    \item Recall that $Q_{e}$ can only require attention at stage $s$ if we have never acted for it. In particular, since $s_{e}$ is the least which changes infinitely often, $Q_{e}$ must eventually be the highest priority requirement which requires attention. At such a stage, we would have acted for $Q_{e}$, and it will never again require attention in the construction. We may thus suppose that it is $P_{e}$ which requires attention infinitely often.

    \item Suppose that $P_{e}$ requires attention infinitely often. Let $s^{*}$ be a stage large enough such that all of the following hold.
    \begin{itemize}
        \item $s_{0},s_{1},\dots,s_{e-1}$ and $n_{0},n_{1},\dots,n_{e}$ have all stabilised.
        \item $d(x)$ for all $x\leq s_{e-1}$ does not change after stage $s^{*}$.
        \item $P_{e}$ requires attention at stage $s^{*}$.
    \end{itemize}
    By our assumption that $e$ is the least for which $s_{e}$ changes infinitely and the earlier observations, such an $s^{*}$ must exist.

    If there is some stage $s>s^{*}$ at which $P_{e}$ does not require attention, then we claim that $P_{e}$ never again requires attention. Let $s$ denote the first stage after $s^{*}$ at which $P_{e}$ does not require attention. Similarly, let $s'$ denote the first stage after $s$ at which $P_{e}$ again requires attention. Recall that $P_{e}$ requires attention at some stage iff $\Psi_{e}$ temporarily computes $g$ with the current $d$. More specifically, we have that $\Psi_{e}^{(d\restriction s)[s])}\neq g$, $\Psi_{e}^{(d\restriction s')[s']}=g$, and at some stage $s''$ between $s$ and $s'$, $d(x)[s'']\neq d(x)[s''+1]$ for some $x<s$. It follows from the construction that $s_{e}[s]=s$, and by the choice of $s^{*}$, we obtain that $s_{e-1}<x<s_{e}[s'']$. For each such $x$, $d(x)$ records either $0$ in the $j^{th}$ entry or the sub-stage at which $B_{j}$ enumerates the $x-s_{e-1}$-th orbit of size $n_{e}$. Consider the following cases.
    \begin{itemize}
        \item If the $j^{th}$ entry of $d(x)[s]$ is nonzero, then it means that $B_{j}$ has enumerated the $x-s_{e-1}$-th orbit of size $n_{e}$. Since $B_{j}$ is punctual, this enumeration cannot be `undone', and the $j^{th}$ entry of $d(x)$ cannot change unless the $j^{th}$ entry of $d(y)$ changes to $0$ for some $y<x$. By choice of $s^{*}$ such a $y$ must be larger than $s_{e-1}$, as $d\restriction(s_{e-1}+1)$ is assumed to have stabilised. But the $j^{th}$ entry of $d(y)$ records the sub-stage at which $B_{j}$ enumerates the $y-s_{e-1}$-th orbit of size $n_{e}$. This means that the $j^{th}$ entry of $d(y)$ cannot possibly be $0$.
        
        \item As noted above, if there is some $y<x$ such that the $j^{th}$ entry of $d(y)$ is $0$, then the $j^{th}$ entry of $d(x)$ will similarly be $0$. We may thus assume that $x$ is the least for which the $j^{th}$ entry of $d(x)[s]$ is $0$. An analysis of the actions in the construction allows one to easily conclude that this can only happen if there is some $k\in\omega$ such that $B_{j}[s]$ contains more orbits of size $k$ than $A$. Obviously, $k<n_{j}\leq n_{e}$, otherwise, we would have acted for $Q_{j}$ at some stage after $s^{*}$, contradicting the assumption that $n_{j}$ has stabilised. Note that we could not have acted for $Q_{j}$ before $s^{*}$, otherwise the $j^{th}$ entry of $d(y)$ would be $0$ for all $y\geq s^{*}$ (by definition of $d(x)$).
    \end{itemize}
    In other words, $d(x)[s]$ once defined, never changes again for each $x<s$, implying that $g\neq\Psi_{e}^{(d\restriction s)[s]}=\Psi_{e}^{d\restriction s}$. That is, $P_{e}$ never again requires attention after stage $s$, contradicting the assumption that it requires attention infinitely often. Thus, we may assume that for all stages after $s^{*}$, $P_{e}$ always requires attention.

    In addition, recall that we may discover in primitive recursive time whether $B_{j}$ enumerates the $x-s_{e-1}$-th orbit of size $n_{e}$, or enumerates more orbits of size $k$ than $A$ does for some $k<n_{e}$. Thus, for each $x>s_{e-1}$, $d(x)$ can be computed primitive recursively. But this implies that $\Psi_{e}^{d}=g$, contradicting the assumption that $g$ is not primitive recursive. Therefore, $P_{e}$ cannot require attention for infinitely many stages.
\end{itemize}
It follows immediately that for each $e$, $P_{e}$ is satisfied as $\Psi_{e}^{d\restriction s_{e}}\neq g$, and thus $\Psi_{e}^{d}\neq g$.

\

\emph{Each $Q_{e}$ is satisfied:} During the construction, note that we enumerate orbits of size $n_{e}$ only at stages where $P_{e}$ is the highest priority requirement which requires attention. Using the observation that $s_{e}$ also stabilises eventually, it follows that after stage $s_{e}$, we never enumerate any orbit of size $k\leq n_{e}$ into $A$. Therefore, $A$ only consists of finitely many orbits of size $k$ for each $k\in\omega$.

It remains to argue that each $Q_{e}$ is satisfied. Since each $n_{e}$ stabilises at a finite value and we only ever enumerate orbits of size $n_{e}[s]$ for some $e,s$, $A$ contains no infinite orbits. We may thus assume that $B_{e}$ also contains no infinite orbits, as $B_{e}$ cannot possibly be isomorphic to $A$ otherwise. We split the proof into two cases as follows.

First, suppose that at some finite stage $s$, we acted for $Q_{e}$. Recall that this means that at stage $s$, $Q_{e}$ is the highest priority requirement which required attention. In particular, we have yet to act for $Q_{e}$ at stage $s$, and there is some orbit of size $k\geq n_{e}[s]$ such that $B_{e}$ contains more orbits of size $k$ than $A$. We claim that after we act for $Q_{e}$, no more orbits of size $k$ will be enumerated into $A$, thus preserving $B_{e}\not\cong A$. When we act for $Q_{e}$, we define $n_{e}$ to be some fresh number at stage $s$, and kick all subsequent $n_{e'}$ for $e'>e$. That is to say, $k<n_{e'}[s']$ for each $e'\geq e$ and $s'>s$. By the properties of the markers $n_{i}$, we also have that $n_{i}[s]<n_{e}[s]\leq k$ for each $i<e$, and thus $k\neq n_{i}[s]$. Therefore, if we never act for any $Q_{i}$ for $i<e$ after stage $s$, $B_{e}$ cannot possibly be isomorphic to $A$, as no more orbits of size $k$ are ever enumerated. On the other hand, if there is some stage $s'>s$ during which we act for $Q_{i}$ where $i<e$, $n_{i}$ is similarly defined to be a fresh number at such a stage $s'$. This once again ensures that $n_{i}[s'']>k$ for any $i\in\omega$ and any $s''>s'$. In conclusion, once we have acted for $Q_{e}$, we never again enumerate any orbit of size $k$ into $A$.

Now suppose that we never act for $Q_{e}$ during the construction. If $Q_{e}$ requires attention at infinitely many stages, then there must be some finite stage during which we act for it. Thus, there must be some stage $s$ after which $Q_{e}$ never again requires attention. In particular, after such a stage, we must have that $B_{e}$ is always a substructure of $A$; as long as we are able to define $q_{e}:A\to B_{e}$ on orbits enumerated into $A$ after such a stage, $q_{e}^{-1}$ is naturally also primitive recursive. We fix such a stage as $s^{*}$, and define $q_{e}:A\to B_{e}$ as follows. Let $k$ be the largest such that there is an orbit of size $k$ in $B_{e}$ at stage $s^{*}$. In addition, let $i\geq e$ be the least such that $n_{i}>k$. Observe that after stage $s_{i}$ in the construction, we never enumerate any orbit of size $<n_{i}$. In particular, by non-uniformly fixing $q_{e}$ on $A\restriction s_{i}$, we need only define $q_{e}$ on orbits of size $\geq n_{i}>k$.

Let $a_{0},a_{1},\dots$, be the orbits enumerated into $A$ after stage $s_{i}$. We further assume that for each stabilising value of $n_{j}$, we enumerate at least one orbit of size $n_{j}$ into $A$. In particular, for each $j\in\omega$, $a_{j}$ has size at most $n_{i+j}$. For convenience, we refer to $d(s_{j})$ for $j\in\omega$ as the \emph{special} entries of $d$, and call $d(x)$ for all other $x$ as the \emph{non-special} entries. Note that we do not truly need to differentiate between the two; this definition is made purely for ease of argument. Define the primitive recursive scheme computing $q_{e}$ from $d$ as follows. Given $j\in\omega$, compute $d(x)$ for each $x\geq s_{i}$ until we have computed $d(s_{i}+2(j+1)+1)$. Since each entry is either special or non-special, this process guarantees that we must either have computed $d$ on at least $j+2$ many special entries, or at least $j+2$ many non-special entries.

We shall argue that by the sub-stage given by the maximum of the $e^{th}$ entry of the various $d(x)$ computed by the procedure above, an appropriate $q_{e}$ image for $a_{j}$ must have been enumerated into $B_{e}$. Consider the cases as follows.
\begin{itemize}
    \item Suppose that we have computed at least $j+2$ many special entries of $d$. That is, we have computed $d$ on the entries $s_{i}<s_{i+1}<\dots<s_{i+j+1}\leq s_{i}+2(j+1)+1$. Since $a_{j}$ has size at most $n_{i+j}$, and all such orbits must have been enumerated into $A$ before stage $s_{i+j+1}$, then it follows that the $e^{th}$ entry of $d(s_{i+j+1})$ encodes a sub-stage large enough such that $A[s_{i+j+1}]$ has become a substructure of $B_{e}$, providing an appropriate $q_{e}$ image for $a_{j}$.

    \item Suppose that the previous case does not hold. That is, there exists $k\leq j$ such that $s_{i+k}\leq s_{i}+2(j+1)+1<s_{i+k+1}$. From the assumption earlier that there is at least one orbit of size $n_{l}$ for each $l\in\omega$, $a_{j}$ must have size $<n_{i+k+1}$. If $a_{j}$ has size $<n_{i+k}$, then using the same argument as before, the sub-stage at which a $q_{e}$ image for $a_{j}$ is enumerated into $B_{e}$ happens before the sub-stage given by the $e^{th}$ entry of $d(s_{i+k})$. In addition, if $a_{j}$ has size $>n_{i+k}$, then $t_{i+k+1}\downarrow$ and $a_{j}$ must have been enumerated into $A$ by stage $t_{i+k+1}$. Recall that if $t_{i+k+1}\downarrow$, then the $e^{th}$ entry of $d(s_{i+k})$ is at least as big as the sub-stage at which $A[t_{i+k+1}]$ becomes a substructure of $B_{e}$. Therefore, we may find an appropriate $q_{e}$ image for $a_{j}$ by such a sub-stage.

    We may thus assume that $a_{j}$ has size exactly $n_{i+k}$. Applying the case assumption, we also obtain that we have computed $d$ on at least $j+2$ many non-special entries, denoted by $x_{0}<x_{1}<\dots<x_{j+1}$. Recall that non-special entries of $d(x)$ records in the $e^{th}$ entry, the sub-stage at which $B_{e}$ enumerates the $x-s_{l}$-th orbit of size $n_{l}$, where $l$ is the largest such that $s_{l}<x$. Therefore, each of the $e^{th}$ entries of $d(x_{0}),d(x_{1}),\dots,d(x_{j+1})$, should encode the sub-stage at which some orbit of size $n_{l}$ enters $B_{e}$ for some $l\geq i$. Since $a_{j}$ is the $j+1$-th orbit of size $\geq n_{i}$ to be enumerated into $A$, it follows that at least one of the $e^{th}$ entries of $d(x_{0}),d(x_{1}),\dots,d(x_{j+1})$, gives a sub-stage large enough at which an appropriate $q_{e}$ image for $a_{j}$ must have been enumerated into $B_{e}$.
\end{itemize}
Thus, provided that $B_{e}\cong A$, that is, the $e^{th}$ entry of $d(x)$ is nonzero for all $x\geq e$, we may primitive recursively compute an isomorphism $q_{e}:A\to B_{e}$ and its inverse using $d$. Theorem~\ref{thm:inj} is proved.
\end{proof}

\section{Degrees of punctual categoricity}

In this section, we show that in every non-zero c.e.~Turing degree, there exists a PR-degree that is low for punctual isomorphism and a PR-degree that is a degree of punctual categoricity. We split the proof into Theorems \ref{thm:low} and \ref{thm:deg}.

\begin{theorem}\label{thm:low}
    Let $\mathbf{d}$ be a non-zero c.e.~Turing degree, then there exists $f\equiv_{T}\mathbf{d}$ such that $f$ is low for punctual isomorphism.
\end{theorem}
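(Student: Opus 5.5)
Fix a c.e. set $D\in\mathbf{d}$ with a primitive recursive enumeration $(D_s)_{s\in\omega}$ (possible since $D\neq\emptyset$). Take $f$ to be a "slow" version of $D$: let $f(x)=0$ if $x\notin D$, and $f(x)=s+1$ if $x$ enters $D$ at stage $s$. Then $f\geq_T D$ trivially, and $f\leq_T D$ because, knowing $x\in D$, we search for the entry stage. So $f\equiv_T\mathbf{d}$.

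The key observation is that $f$ carries no "speed" information beyond primitive recursive information. Given a primitive recursive scheme $\Psi$, a computation $\Psi^{f}(y)$ runs in primitive recursive time and queries finitely many values $f(x)$. We can simulate it with the approximation $f_t(x)$, which equals $s+1$ if $x\in D_t$ entered at stage $s$, and $0$ otherwise. For $t$ large enough, $f_t$ agrees with $f$ on all queried values. The trouble is that $t$ might not be primitive recursively bounded. If $\Psi^{f}$ is an isomorphism (and $\Psi'^{f}$ its inverse) between punctual $A$ and $B$, I would therefore not use $f$ itself. Instead I would guess using $f_y$ at input $y$ and correct errors, and I expect to need to build $f$ more carefully than the plain entry-stage function to make this work.

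So I plan a modification: construct $D$ itself, or rather $f$, by a permitting construction below $\mathbf{d}$ that meets lowness requirements. The requirements are $R_{\langle\Psi,\Phi,A,B\rangle}$: if $\Psi^f$ and $\Phi^f$ are mutually inverse isomorphisms $A\leftrightarrow B$, then $A\cong_{pr}B$. Take $f=D\oplus$ the entry-stage function of a c.e. set $E\equiv_T D$ that we build by permitting. The strategy for $R$ is as follows. At stage $s$, compute $\Psi^{f_s}$ and $\Phi^{f_s}$ on arguments up to $s$. Each R-strategy builds, primitive recursively, a candidate isomorphism $p$ that follows the current guesses $\Psi^{f_s}$, which stay consistent with being a partial isomorphism. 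Since $A$ and $B$ are punctual, we can extend partial isomorphisms back-and-forth only when guesses stay consistent. When an $f$-change, that is an enumeration into $E$, would destroy a computation that $p$ has already copied, we restrain it. Restraint conflicts with permitting for coding $D$, which is the usual tension, resolved by having $E$ code $D$ with delayed markers that are moved when permitted.

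The main obstacle is getting a primitive recursive isomorphism from a limit of guesses. Once $p$ commits to $p(a)=b$, it cannot retract, whereas $\Psi^{f}(a)$ may settle elsewhere. The plan is to make the relevant $f$-values that affect $\Psi^{f}\restriction n$ settle at primitive recursive stages via restraint, with permission-based coding of $D$ placed only above the restraint. The argument that $D\leq_T f$ then uses the standard fact that infinitely many permissions occur (since $D$ is noncomputable). Alternatively, a simpler route I would try first is to pick $f$ to grow so fast that every $\Psi^{f}$ is dominated, though that seems to conflict with $f\equiv_T\mathbf{d}$. So the restraint/permitting route is my committed plan, with the verification that $p$ is total and onto using punctuality of $A$ and $B$.
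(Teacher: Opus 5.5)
Your proposal has a genuine gap at its core. You commit to a restraint/preservation strategy, and that strategy cannot meet these requirements while still coding a noncomputable set into $f$.

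For $p$ to be primitive recursive it must commit to $p(a)$ at a stage primitive recursive in $a$. Since $p$ cannot retract, those commitments are safe only if the $f$-values in the use of the guessed computations never change afterwards. So a single requirement $R$ must restrain the uses of $\Psi^{f}\restriction n$ and $\Phi^{f}\restriction n$ for \emph{every} $n$. Its restraint therefore goes to infinity in exactly the case you must handle, namely when $\Psi^{f},\Phi^{f}$ really are inverse isomorphisms; this happens even if $\Psi$ merely queries $f$ and ignores the answers. Coding ``above the restraint'' then pushes every marker to infinity, so $D\le_T f$ fails. Letting the coding injure $R$ instead leaves $p$ committed to values that $\Psi^{f}$ later abandons. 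Nothing in your sketch bounds the action of a single requirement, so this conflict is never resolved.

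Independently, taking $f=D\oplus\cdots$ with an arbitrary c.e.\ $D\in\mathbf{d}$ already endangers lowness, because you have no control over what a scheme can extract from the $D$-half. For instance, $D$ might have a computable, non-primitive-recursive $\{0,1\}$-valued join component, and such a function can primitive recursively compute an isomorphism between punctual structures that are not punctually isomorphic. The $\omega$-valued entry-stage component also hands $\Psi$ large numbers it can use as search bounds.

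The missing idea is to diagonalize against a \emph{primitive recursive test oracle} rather than to preserve computations relative to $f$. In the paper, $f$ is $\{0,1\}$-valued and built entirely by the construction. $W$ is coded only at movable markers, $f(m_x)=W(x)$, and $f$ is changed from $0$ to $1$ only on positions $\ge m_x$ when $x$ enters $W$.

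A requirement whose pointer is at $m_x$ runs $\Psi_i,\Psi_j$ on $f^{*}=(f\restriction m_x)^\frown 1^{\omega}$. There are two outcomes.
\begin{itemize}
\item If no error ever appears (non-injectivity, failure to preserve the signature, or $\Psi_i^{f^*}\circ\Psi_j^{f^*}\neq\mathrm{id}$), then $\Psi_i^{f^*}$ is already a punctual isomorphism with primitive recursive inverse, because $f^*$ is primitive recursive. The requirement is met without any reference to how $\Psi^{f}$ behaves.
\item If an error appears, it has some finite use $s$, and setting $f\restriction s=f^*\restriction s$ preserves it forever.
\end{itemize}
To obtain permission for that change, the requirement records $g(x)=W_s(x)$ and moves its pointer to $m_{x+1}$. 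Since $g$ is computable and $W$ is not, some $x$ enters $W$ after its error was found, and the requirement then acts once and is finished. So every requirement acts only finitely often, which is what lets the markers settle and yields $f\equiv_T W$.
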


\begin{proof}
Let $W$ be a non-recursive c.e.~set and let $\{\Psi_{i}\}_{i\in\omega}$ and $\{A_{n}\}_{n\in\omega}$ be listings of all primitive recursive schemes and all punctual structures respectively. We construct a total function $f:\omega\to\{0,1\}$ such that $f\equiv_{T}W$ and satisfying the following requirements.
\begin{align*}
    R_{i,j,m,n}:&\text{ If }\Psi_{i}^{f}:A_{m}\to A_{n}\text{ and }\Psi_{j}^{f}:A_{n}\to A_{m}\text{ are isomorphisms such that }\Psi_{i}^{f}\circ\Psi_{j}^{f}\text{ is the}\\
    &\text{ identity map, then }A_{m}\text{ and }A_{n}\text{ are punctually isomorphic.}
\end{align*}

\

\emph{An initial description:} To construct $f\equiv_{T}W$, we use a standard permitting technique. Throughout the construction, there will be a set of \emph{markers} $\{m_{0}<m_{1}<\dots\}\subseteq\omega$ such that $f(m_{i})=W(i)$, and a marker $m_{i}$ may be moved during stages at which some $j<i$ is enumerated into $W$. This ensures that $W\leq_{T}f$ (to be proved later). To obtain $f\leq_{T}W$, we will only change $f(x)$ at stage $s$ if some $y\leq x$ is enumerated into $W$ at stage $s$.

We now turn our attention to the requirements $R_{i,j,m,n}$. Let $i,j,m,n\in\omega$ be given. The oracle $f$ begins as the all $0$ function. For a single requirement $R_{i,j,m,n}$, we maintain a \emph{pointer} $p$ which tracks the value at which $R_{i,j,m,n}$ is currently searching for an \emph{error}; let $f^{*}\coloneqq(f\restriction p)^{\frown}1^{\omega}$ and check if both of the following holds.
\begin{itemize}
    \item $\Psi_{i}^{f^{*}}\circ\Psi_{j}^{f^{*}}$ is the identity.

    \item $\Psi_{i}^{f^{*}}$ and $\Psi_{j}^{f^{*}}$ are both injective and preserve the signature of $A_{m}$ and $A_{n}$.
\end{itemize}
If no error is ever found at this value of $p$, then $\Psi_{i}^{f^{*}}$ must be a punctual isomorphism from $A_{m}$ to $A_{n}$, since $f^{*}$ is clearly primitive recursive. On the other hand, if one of the conditions do not hold, it must be witnessed at some finite stage $s$ with the oracle $f^{*}\restriction s$. By changing $f\restriction s$ to $f^{*}\restriction s$, and provided that $f\restriction s$ never again changes, this failure of one of the above conditions will be preserved, thus ensuring that $R_{i,j,m,n}$ is satisfied.

In order to stay consistent with the permitting strategy, \emph{permission} must be provided by $W$ for $f\restriction s$ to change. That is, some $x\leq p$ has to enter $W$ at some stage after the error for $R_{i,j,m,n}$ is found in order for $f(y)$, where $p\leq y<s$, to change. For a fixed $p$, permission might never be obtained. To circumvent this, $R_{i,j,m,n}$ should search for errors at multiple locations. More specifically, for the strategy of $R_{i,j,m,n}$, we also attempt to define a computable function $g$ as follows. At each stage $s$ of the construction, $p$ will `point' to some marker $m_{x}$. Once an error for $R_{i,j,m,n}$ has been found at (the current value of) $m_{x}$, define $g(x)$ to also be the current value of $W(x)$. After which we `point' $p$ to the marker $m_{x+1}$ and repeat the process. The function $g$ as defined is clearly computable, and provided that errors are always successfully found at each $m_{x}$, $g$ will also be total. Since $W$ is non-recursive, there must be some $x$ at which $x$ enters $W$ at a stage after an error was found for $R_{i,j,m,n}$ at $m_{x}$, thus allowing us to \emph{act} for $R_{i,j,m,n}$ by changing the oracle $f$.

As the strategy of each requirement $R_{i,j,m,n}$ only depends on whether an error can be found, we suppress the indices and simply refer to $R_{i,j,m,n}$ as $R_{e}$. Similarly, $p_{e}$ and $g_{e}$ will respectively be the pointer and computable function associated with the strategy for $R_{e}$.

\

\emph{The general strategy and injury:} The priority of the requirements will be arranged as $R_{0},R_{1},\dots$. Since the values of the markers are dynamic, when we say that $p_{e}$ \emph{points} to $m_{x}$, this means that the value of $p_{e}$ copies $m_{x}$ as it changes. At each stage of the construction, $R_{e}$ will always be searching for errors using $f_{e}\coloneqq (f\restriction p_{e})^{\frown}1^{\omega}$. Once an error is found, define $g_{e}(x)$ to be the current value of $W(x)$ where $x$ is such that $p_{e}$ currently points to $m_{x}$. Also let $p_{e}$ now point to $m_{x+1}$. For similar reasons as before, there is either some $x$ for which no error is ever found for $R_{e}$ at $x$ or $x$ eventually enters $W$, providing permission for $R_{e}$ to act.

While the strategy for $R_{e}$ is being pursued, one of the following could happen.
\begin{itemize}
    \item The value of $m_{x}$ at which $R_{e}$ previously found an error changes.

    \item The value of $m_{x}$ at which $p_{e}$ is currently pointing changes.

    \item The oracle changes value at $y$ for some $y<p_{e}$.
\end{itemize}
In the above scenarios, it is evident that any potential satisfaction found for $R_{e}$ may now become undone, and we say that $R_{e}$ is \emph{injured}. However, we shall show that in each of the above scenarios, progress in the strategy of a higher priority $R_{e'}$ must have been made. To give a brief description of why this is so, recall that $m_{x}$ changes only if some $x'<x$ enters $W$. In particular, $W$ has just provided permission to change $f(y)$ for any $y\geq x'$. That is, any $R_{e'}$ which had earlier found an error at $m_{x'}$ now has permission to act. As long as we arrange the construction such that such an $R_{e'}$ must be of higher priority or $e'=e$, it is clear that the potential injury is finite. To this end, throughout the construction, $p_{e}<p_{e'}$ if $e<e'$. In addition, for each $x$ such that $p_{e}<m_{x}<p_{e'}$, it must be that $R_{e'}$ has found an error at $m_{x}$.

\

\emph{Construction:} During the construction, each requirement $R_{e}$ will be declared either satisfied or unsatisfied. At the start of the construction, for each $e\in\omega$, let $m_{e}[0]=e$, point each $p_{e}$ at $m_{e}$, and declare $R_{e}$ to be unsatisfied. Also define $f(x)=0$ for all $x\in\omega$. At each stage $s$ of the construction, do the following.
\begin{description}
    \item[Step 1] Let $e$ be the least such that $R_{e}$ is currently unsatisfied. Then begin searching for an error for $R_{e}$ at $p_{e}$.
    
    \item[Step 2] Let $e$ be the least such that $R_{e}$ has found an error at $p_{e}$ and $R_{e}$ is currently unsatisfied. If no such $e$ exists, then proceed to Step 3. Otherwise, let $x$ be such that $p_{e}$ is currently pointing at $m_{x}$. Then define $g_{e}(x)=W(x)[s]$, and let $p_{e}$ now point to $m_{x+1}$. For each $e'>e$, declare $R_{e'}$ to be unsatisfied, define $g_{e'}(y)\uparrow$ for each $y\in\omega$, and point $p_{e'}$ to $m_{x+1+e'-e}$. For convenience, we refer to this action as \emph{initialising} $R_{e'}$.

    \item[Step 3] Let $e$ be the least such that $R_{e}$ has found an error at $m_{x}[s]$ for which $x$ has just entered $W$. Then define $f\restriction s$ to be $(f\restriction m_{x})^{\frown}1^{s-m_{x}}$, declare $R_{e}$ to be satisfied, and initialise $R_{e'}$ for each $e'>e$. Also define $m_{x'}[s+1]$ to be the least number $\geq s$ and $\neq m_{x''}[s+1]$ for any $x''<x'$. Notice that $f$ on all such inputs are currently $0$. For each $x'\geq x$ that is currently contained in $W$, define $f(m_{x'}[s+1])=1$.
\end{description}

\

\emph{Verification:} To verify that the construction works, we prove the following statements.
\begin{enumerate}[label=(\roman*)]    
    \item For each $x\in\omega$, the value of $m_{x}\coloneqq \lim_{s}m_{x}[s]$ never changes after the stage $s$ for which $W[s]\restriction x=W\restriction x$, and $f(m_{x})=W(x)$.
    
    \item For each $e\in\omega$, there is some $x_{e}\in\omega$ such that $p_{e}$ points at $m_{x_{e}}$ for cofinitely many stages.
\end{enumerate}
Let $x\in\omega$ be given. Fix the least stage $s$ such that $W[s]\restriction x=W\restriction x$. The only action in the construction which possibly changes the value of $m_{x}$ is in Step 3. In particular, if $m_{x}[s+1]\neq m_{x}[s]$, then at stage $s$, some $x'<x$ must have been enumerated into $W$. Since $s$ is a stage at which $W[s]\restriction x=W\restriction x$, this cannot happen after stage $s$, and thus $\lim_{s}m_{x}[s]=m_{x}[s+1]$. To see why $f(m_{x})=W(x)$, consider the following cases.
\begin{itemize}
    \item There is some stage $t$ such that $x$ enters $W$ at stage $t$. If $t\geq s$, then at stage $t$, Step 3 of the construction ensures that we define $f(m_{x}[t+1])=1$. Since the value of $m_{x}$ never changes after stage $s$, it follows that $f(m_{x})=f(m_{x}[t+1])=1$. On the other hand, if $t<s$, then it must be that $s>0$, and at stage $s$, some $x'<x$ enters $W$. Then the construction would have defined $f(m_{x}[s+1])=1$ since $x\in W[t]\subseteq W[s]$ and $x>x'$. Just as before, since the value of $m_{x}$ never again changes after stage $s+1$, $f(m_{x})=f(m_{x}[s+1])=1$. 

    \item If $x\notin W$, then we claim that $f(m_{x})=0$. Observe that whenever some $x'<x$ enters $W[t]$, $m_{x}[t+1]$ would be defined to be a fresh value; thus $f(m_{x}[t+1])[t]=0$. This implies that $f(m_{x})=f(m_{x}[s+1])[s]=0$ since no $x'<x$ ever enters $W$ after stage $s$.
\end{itemize}
Using (i), we may easily show that $f\equiv_{T}W$. Given $f$ and $x$, suppose that $m_{x'}$ has been computed for each $x'<x$, and search for the stage $s$ such that $W[s]\restriction x=\{x'\mid x'<x\text{ and }f(m_{x'})=1\}$. For such a stage $s$, it must be that $m_{x}[s+1]=m_{x}$, and we may then retrieve $W(x)$ by computing $f(m_{x})$. To compute $f(x)$, since $f(x)$ possibly changes only when some $y\leq x$ enters $W$, at the stage $s$ such that $W[s]\restriction (x+1)=W\restriction (x+1)$, $f(x)[s+1]=\lim_{s}f(x)[s]$.

From Steps 2 and 3 of the construction, it is evident that the markers $m_{x}$ at which each $p_{e}$ points at is increasing (in $x$). Thus, if $p_{e}$ does not point at a single marker for cofinitely many stages, then $p_{e}$ points at each marker for only finitely many (possibly $0$) stages. Let $e\in\omega$ be the least such that this holds. Following the construction, a pointer $p_{e}$ only changes the marker it points at due to one of the following reasons.
\begin{itemize}
    \item An error for $R_{e}$ was found at $m_{x}$ at which $p_{e}$ is currently pointing. Then $p_{e}$ subsequently points at $m_{x+1}$. (See Step 2 of the construction.)

    \item $R_{e}$ has acted (see Step 3).

    \item $R_{e}$ was initialised.
\end{itemize}
$R_{e}$ is only initialised if there is some $e'<e$ such that an error for $R_{e'}$ was found (see Step 2), or $R_{e'}$ has acted (see Step 3). In either of these situations, the marker $p_{e'}$ is pointing at changes. By the assumption that $e$ is the least for which the marker $p_{e}$ points at changes infinitely often, we may assume that pass some finite stage $s^{*}$, $p_{e}$ cannot change due to $R_{e}$ being initialised. After such a stage, $p_{e}$ changes only if it has acted or if an error is found at the current marker $p_{e}$ is pointing at.

If $R_{e}$ has acted, then it must have been declared to be satisfied. Furthermore, as no other higher priority requirement acts after $s^{*}$, Step 2 of the construction never again considers $R_{e}$. In other words, once $R_{e}$ acts after stage $s^{*}$, $p_{e}$ never again changes the marker it points at.

Finally, we consider the possibility that $R_{e}$ never acts after stage $s^{*}$. Then the only way that $p_{e}$ changes the marker it points at infinitely often is if errors are found at each marker that $p_{e}$ points at. In addition, $R_{e}$ never receives permission to act upon these errors; there is some $x$ such that for all $x'\geq x$, $g_{e}(x')\downarrow=W(x')$, contradicting the assumption that $W$ is non-recursive.

A quick analysis of the construction allows one to conclude that if $p_{e}$ stabilises, then $R_{e}$ must remain in the satisfied state for cofinitely many stages. In addition, this means that either some error for $R_{e}$ was found and acted upon, or there is some $x$ such that no error for $R_{e}$ is ever found at $m_{x}$. Thus, each $R_{e}$ is met. Theorem~\ref{thm:low} is proved.
\end{proof}

\

\begin{theorem}\label{thm:deg}
    Let $\mathbf{d}$ be a non-zero c.e.~Turing degree, then there exists $g\equiv_{T}\mathbf{d}$ such that $g$ is a degree of punctual categoricity.
\end{theorem}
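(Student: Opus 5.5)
The plan is to reduce to the one known source of degrees of punctual categoricity, namely Proposition~2.7 in~\cite{KMN-17-AL}: if the graph of a total function $g$ is primitive recursive, then $\deg_{PR}(g)$ is a degree of punctual categoricity. Given a non-zero c.e.\ degree $\mathbf{d}$, it therefore suffices to build a total $g\equiv_T W$, for a fixed c.e.\ set $W\in\mathbf{d}$, whose graph $\{(x,y): g(x)=y\}$ is primitive recursive.

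The natural choice encodes enumeration stages. Fix a primitive recursive enumeration $(W_s)_s$ of $W$ in which at most one element enters per stage; such an enumeration exists for any nonempty c.e.\ set, by padding with repetitions. Define
\[
g(x)=\begin{cases} \langle s,1\rangle & \text{if $x$ enters $W$ at stage $s$},\\ \langle 0,0\rangle & \text{if } x\notin W.\end{cases}
\]
The graph is primitive recursive. To decide whether $g(x)=\langle s,1\rangle$, we check whether $x\in W_{s}\setminus W_{s-1}$. To decide whether $g(x)=\langle 0,0\rangle$, we must decide whether $x\notin W$, which is not primitive recursive. I would repair this by having the output carry its own certificate. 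One option is to replace the value at $x\notin W$ by something else, but any fixed value forces us to decide $x\notin W$, so a different coding is needed. The cleaner option is to define $g(x)$ as the least stage $s$ such that either $x\in W_s$ or $x\notin W$ becomes ``visible'' at $s$. Since non-membership is never visible, I would instead use $g(x)=$ the stage at which $x$ enters $W$ if $x\in W$, and otherwise a value $y$ chosen so that the pair $(x,y)$ is certified primitive recursively. Computing the graph at $(x,y)$ with $x\notin W$ requires knowing that $x$ never enters, which cannot be bounded.

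This is where I expect the main difficulty, and it means the naive reduction must be abandoned. Instead I would construct $g$ directly by a permitting argument mirroring Theorem~\ref{thm:low}. Markers $m_x$ code $W(x)$ into $g$, giving $W\le_T g$. Values of $g$ change only with $W$-permission, giving $g\le_T W$. The values of $g$ themselves are arranged to be large ``stage'' values, set by the convention that $g(y)$ is changed only to a number exceeding the current stage. With that convention, the question ``$g(y)=v$?'' can be answered by running the construction for $v$ stages, since no later stage can set $g(y)$ to $v$. This makes the graph primitive recursive even though $g$ is only limit-computable with c.e.\ permitting, with finitely many changes per argument tied to $W$. The verification then has three parts: each $m_x$ stabilises (as in (i) of Theorem~\ref{thm:low}); $g(m_x)$ reflects $W(x)$, for instance odd versus even large values; and the graph is primitive recursive because every value written after stage $s$ exceeds $s$, while each change at $y$ is triggered by $W$ entering below $y$.

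Once the graph is primitive recursive and $g\equiv_T W$, Proposition~2.7 of~\cite{KMN-17-AL} gives that $\deg_{PR}(g)$ is a degree of punctual categoricity, and it lies inside $\mathbf{d}$. The crux to check carefully is whether this ``values exceed the stage'' trick is compatible with the $W$-coding. I believe it is, since at each stage only finitely many fresh large values are assigned.
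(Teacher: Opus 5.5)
\textbf{The reduction itself is impossible.} Your plan cannot work, and the obstruction is the general form of the difficulty you already noticed at $x\notin W$. A total function whose graph is primitive recursive, or even merely computable, is itself computable: find $g(x)$ by searching for the unique $y$ with $(x,y)$ in the graph. So no total $g$ with $W\le_T g$ and $W$ non-computable can have a primitive recursive graph. Proposition~2.7 of~\cite{KMN-17-AL} therefore only produces degrees of punctual categoricity inside the Turing degree $\mathbf{0}$, which is why it cannot reach a non-zero c.e.\ degree.

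Your ``values exceed the stage'' trick breaks exactly here. Running the construction for $v$ stages tells you whether $g(y)[v]=v$. To conclude $g(y)=v$, however, you must also know that $g(y)$ is never reset after stage $v$, i.e.\ that no number entitled to permit a change at $y$ enters $W$ later. That is a $\Pi^0_1$ fact about $W$, so the graph you obtain is at best co-c.e. The permitting machinery of Theorem~\ref{thm:low} gives $g\equiv_T W$, but it cannot make the graph decidable.

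\textbf{What is missing.} You have to build the categoricity witness directly rather than quote the graph criterion. The paper's construction has three parts.
\begin{enumerate}
\item \emph{The function.} The paper takes $g$ to record the stage at which $x$ enters $W$, together with the convergence time of every primitive recursive $p_e(x)$. This keeps $g\equiv_T W$.
\item \emph{Upper bound.} A punctual structure $B$ is built by a pressing strategy. Components have pairwise distinct $C$-cycle sizes, and an $R$-map sends each element to the root of its component. Each component is held open until every relevant punctual $B_k$ has either copied its root or been shown non-isomorphic (inactive or pending). Because $g$ bounds the running times of all primitive recursive functions, it can primitive recursively recover the closing stages of components in any punctual copy. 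This yields $g\ge_{pr}h\oplus h^{-1}$ for an isomorphism $h$ between any two punctual copies.
\item \emph{Lower bound.} A second copy $B'$ is built by \emph{switching}. When a component becomes ready (e.g.\ $x$ enters $W$), a duplicate with large indices is added, together with a rigid tail attached crosswise in $B$ and $B'$. This forces every isomorphism $f:B\to B'$ to send $b_{e,0,0}$ to the large-index duplicate root. Reading these indices recovers $g$ primitive recursively from $f$, with the even-indexed components supplying the bounds needed to locate the odd-indexed ones.
\end{enumerate}
Neither the upper-bound nor the lower-bound half appears in your proposal.
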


\begin{proof}
Let $W$ be a non-recursive c.e.~set, $\{p_{e}\}_{e\in\omega}$ be a listing of primitive recursive functions, and let $\{B_{e}\}_{e\in\omega}$ be a listing of the punctual structures in the language of our structure $B$. We construct $g\equiv_{T}W$ satisfying the following requirements.
\begin{align*}
    G:&\text{ There exists punctual }B'\cong B\text{ such that if }f:B\to B'\text{ is an isomorphism,}\\
    &\text{then }f\oplus f^{-1}\geq_{pr}g.\\
    Q_{m,n}:&\text{ If }B_{m}\cong B_{n}\cong B,\text{ then there exists }h:B_{m}\to B_{n}\text{ an isomorphism, such that }g\geq_{pr}h\oplus h^{-1}.
\end{align*}
For each $x\in\omega$, define $g(\langle 0,x\rangle)$ to be the stage at which $x$ enters $W$, and $-1$ if no such stage exists. For each $e>0$ and each $x\in\omega$, also define $g(\langle e,x\rangle)$ to be the stage at which $p_{e-1}(x)\downarrow$. It is evident that $g\equiv_{T}W$.

\

\emph{An initial description of $B$:} We adopt the pressing strategy from \cite{kmn17}. The language of our structure will consist of four unary function symbols $S,P,R,C$ defined below. In the $e^{th}$ \emph{component} of our structure, we will have elements which form a \emph{spine}, given by $b_{e,i,0}$ for the various $j\in\omega$, and a distinguished element, $b_{e,0,0}$ which is the \emph{root} (illustrated in Fig. \ref{fig:deg}).

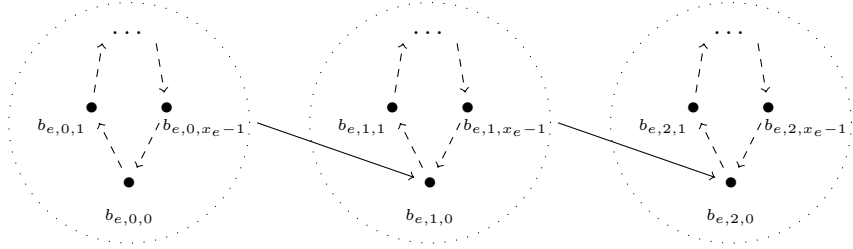
\begin{figure}
    \centering
    \begin{tikzpicture}
        \foreach \x in {0, 1, 2}
        {
        \node (\x0) at (4*\x,1) [label={[shift={(0,-0.9)}]\tiny $b_{e,\x,0}$}] {$\bullet$};

        \node (\x1) at (4*\x-0.5,2) [label={[shift={(-0.4,-0.7)}]\tiny $b_{e,\x,1}$}] {$\bullet$};
        \node (\x3) at (4*\x+0.5,2) [label={[shift={(0.5,-0.7)}]\tiny $b_{e,\x,x_{e}-1}$}] {$\bullet$};

        \node (\x2) at (4*\x,3) {$\dots$};

        \draw[->,dashed] (\x0) -- (\x1);
        \draw[->,dashed] (\x1) -- (\x2.south west);
        \draw[->,dashed] (\x2.south east) -- (\x3);
        \draw[->,dashed] (\x3) -- (\x0);
        \draw[-,loosely dotted,fill=none,draw=black] (4*\x,1.8) circle (1.6cm);
        }

        \draw[->] (1.7,1.8) -- (10);
        \draw[->] (5.7,1.8) -- (20);

    \end{tikzpicture}
    \caption{The $e^{th}$ component of $B$. The function arrows for $R$ and $P$ are not shown here. The solid arrows represent the function $S$, and the dashed arrows represent $C$.}
    \label{fig:deg}
\end{figure}

\begin{itemize}
    \item For each element along the spine, $b_{e,i,0}$, the function $C$ generates $x_{e}$-\emph{cycles}, consisting of finitely many elements $b_{e,i,j}$, for each $j<x_{e}$ with the property that $C(b_{e,i,x_{e}-1})=b_{e,i,0}$ and $C(b_{e,i,j})=b_{e,i,j+1}$ for each $j<x_{e}-1$.
    
    \item The range of the map $S$ consists only of elements along the spine. More specifically, for each $i$ and $j<x_{e}$, $S(b_{e,i,j})=b_{e,i+1,0}$.

    \item $P$ is the identity map on all elements of the form $b_{e,i,j}$.
    
    \item Finally, $R$ maps all elements within the $e^{th}$ component to $b_{e,0,0}$, the root. This ensures that in any potential copy of $B$, the roots of each component must be enumerated quickly after any element within that same component is enumerated.
\end{itemize}
To aid in the strategy for $G$ and the various $Q_{m,n}$, we shall keep the components `sufficiently' different by keeping the cycle sizes of each component different.

\

\emph{The pressing strategy:} Except for stages where we `switch' (see strategy for $R$), the structures $B$ and $B'$ will be enumerated in exactly the same way. In much the same spirit as the original pressing argument, we want to `force' the structures $B_{e}$ to remain as substructures of $B$ and $B'$ at each stage. In general, the $2e^{th}$ and $2e+1$-th components of $B$ will depend on the structures $B_{n}$ for each $n\leq e$. At each stage of the construction, only one component is not \emph{closed}: there is some element in the component such that $S$ is yet to be defined on that element.

For simplicity, first consider the $0^{th}$ component. Set the cycle size for the $0^{th}$ component as $1$. While waiting for $B_{0}$ to enumerate elements, continue enumerating the elements $b_{0,i,0}$ of the spine of the $0^{th}$ component and defining $C(b_{0,i,0})=b_{0,i,0}$. When some element is enumerated into $B_{0}$, compute $C^{m}R(b)$ for each $m\leq M_{1}$ (where $M_{1}\geq 1$ to be defined later).
\begin{itemize}
    \item If $CR(b)\downarrow=R(b)$, then it means that the structure $B_{0}$ has enumerated the root of the $0^{th}$ component.

     \item If the least $m>0$ for which $C^{m}R(b)\downarrow=R(b)$ is not $1$, then $B_{0}$ has revealed some cycle size not yet enumerated into $B$. By never enumerating such a cycle size into $B$, we may then guarantee that $B_{0}\not\cong B$. 

    \item If such an $m>0$ does not exist, that is, $C^{m}R(b)\downarrow\neq R(b)$ for each $0<m\leq M_{1}$, then we may intuitively think of this as the situation in which $B_{0}$ has revealed some `large' cycle size. While we may not directly obtain that $B_{0}\not\cong B$ in this case, by some careful arrangement of the cycle sizes of each subsequent component, we may avoid the eventual size of the cycle of $R(b)$.
\end{itemize}
Once one of the above is witnessed, we may then \emph{close} the $0^{th}$ component by defining $S(b_{0,i,0})=b_{0,i,0}$ where $i$ is the largest index such that $b_{0,i,0}$ currently exists in $B$. The idea here is that in the first case, we have successfully made $B_{0}$ `copy' our structure $B$, and hence may proceed to enumerating other components. In the second and third cases, we continue enumerating the $0^{th}$ component until some guarantee that $B_{0}\not\cong B$ is obtained before closing the $0^{th}$ component and starting new components. In particular, there is either some element $b\in B_{0}$ such that $R(b)$ has a different cycle size than anything enumerated into $B$ thus far, or has cycle size at least $M_{1}$.

Let $e>0$ be given and suppose that $M_{e}$ has been defined with the property there is at least one (non-zero) size $x_{e}<M_{e}$ that is available. In addition, for each $n\leq\lfloor e/2\rfloor$, one of the following holds.
\begin{itemize}
    \item For each $e'<e$, the root of the $e'$-th component has been enumerated into $B_{n}$; there is some $b\in B_{n}$ such that the least $m>0$ for which $C^{m}R(b)\downarrow=R(b)$ is such that $m=x_{e'}$.

    \item There is some $b\in B_{n}$ such that the least $m>0$ for which $C^{m}R(b)\downarrow=R(b)$ is such that $m\neq x_{e'}$ for any $e'<e$. Then such a size $m$ must have been declared unavailable, and we shall refer to such $B_{n}$ as \emph{inactive}.

    \item If $B_{n}$ is not inactive, but there is some $b\in B_{n}$ such that for each $0<m\leq M_{e},\,C^{m}R(b)\downarrow\neq R(b)$, then we say that $B_{n}$ is \emph{pending} and call $b$ the $e-1$-\emph{witness}.
\end{itemize}
The intuition is that for $B_{n}$ which are inactive, we already have that $B_{n}\not\cong B$, while the $B_{n}$ which are active are those that we still have to monitor during the construction. For those $B_{n}$ which are pending, there is still some `mystery' element $b\in B_{n}$ not revealed to be part of any component enumerated into $B$ thus far. The idea is that we do not want such an element to be able to mimic some component enumerated into $B$ at a later stage. In building the $e^{th}$ component, we need to take into account all such structures and ensure that we only enumerate cycles with sizes smaller than $R(b)$ for the various witnesses $b$.

Begin the $e^{th}$ component by enumerating the elements $b_{e,i,0}$ and letting the cycle sizes be $x_{e}$: for each $j<x_{e}-1$, let $C(b_{e,i,j})=b_{e,i,j+1}$ and $C(b_{e,i,x_{e}-1})=b_{e,i,0}$. We shall only close the $e^{th}$ component when, for each $n\leq\lfloor e/2\rfloor$ where $B_{n}$ is not inactive, either $B_{n}$ has enumerated the root of the $e^{th}$ component, or if $B_{n}$ has a $e$-witness $b$. Recall that this means that for each $0<m\leq M_{e+1}$ (sufficiently large, to be defined shortly), $C^{m}R(b)\downarrow\neq R(b)$. As before, while waiting for these conditions to be met, we continue extending the spine of the $e^{th}$ component to keep $B$ (and $B'$) punctual.
\begin{itemize}
    \item If $B_{n}$ was previously declared pending, then it has a $e-1$-witness $b$. That is to say, $R(b)$ cannot possibly have cycle size $x_{e}<M_{e}$, since $B_{n}$ has already shown that $C^{x_{e}}R(b)\downarrow\neq R(b)$. Continue computing $C^{m}R(b)$ for the $e-1$-witness $b$. This process either produces some $M_{e}< m\leq M_{e+1}$ such that $C^{m}R(b)\downarrow=R(b)$, or we find that for each $0<m\leq M_{e+1},\,C^{m}R(b)\downarrow\neq R(b)$. If the former holds, then declare $B_{n}$ to be inactive and declare the size $m$ to be unavailable (recall that this means that such a size is never enumerated after this stage). Furthermore, note that $m\neq x_{e'}$ for any $e'\leq e$; there are no cycles of size $m$ in $B$ currently. If the latter holds, then observe that $b$ is now a $e$-witness for $B_{n}$ (and trivially still a $e-1$-witness for $B_{n}$). In summary, for each pending $B_{n}$, it either remains pending, or becomes inactive.

    \item If $B_{n}$ is neither inactive nor currently pending, then for each element $b$ enumerated into $B_{n}$, compute for each $0<m\leq M_{e+1},\,C^{m}R(b)$. If for each $m\leq M_{e+1},\,C^{m}R(b)\downarrow\neq R(b)$, then declare $B_{n}$ to be pending with the $e$-witness $b$. We may thus assume that there is some $0<m\leq M_{e+1}$ for which $C^{m}R(b)\downarrow=R(b)$. Let $m$ be the least where this holds. If $m\neq x_{e'}$ (or $y_{e'}$, to be defined later) for any $e'\leq e$, then declare $m$ to be unavailable, and $B_{n}$ to be inactive. For the same reasons as before, $B_{n}$ cannot possibly be isomorphic to $B$. Recursively suppose that the $e'$-th component for each $e'<e$ is finite. That is, there are only finitely many elements $b'\in B$ such that $R(b')$ has cycle size $x_{e'}$ (or $y_{e'}$) for some $e'<e$. In particular, $B_{n}$ must eventually enumerate some element $b$ for which $R(b)$ does not have cycle size $x_{e'}$ for any $e'<e$, otherwise, we will again discover that $B_{n}\not\cong B$ (and can declare $B_{n}$ to be inactive). That is, if $B_{n}$ is not declared to be pending or inactive, then it must, in finite time, enumerate an element $b$ such that the least $m>0$ for which $C^{m}R(b)\downarrow=R(b)$ is exactly $x_{e}$.
\end{itemize}
Evidently, after starting the $e^{th}$ component in $B$, for each $n\leq\lfloor e/2\rfloor$, within finite time, each $B_{n}$ must either reveal a $e$-witness, become inactive, or enumerate the root of the $e^{th}$ component. Once such a stage is reached, then we close the $e^{th}$ component by defining $S(b_{e,i,j})=b_{e,i,0}$ for each $j<x_{e}$ and where $i$ is the unique index for which $S(b_{e,i,j})$ has yet to be defined. Roughly speaking, since each structure potentially only `blocks' one size, and each component uses only one size, it is not hard to see that as long as $M_{e+1}>(e+1)+(\lfloor e/2\rfloor+1)+1$, we may always successfully find $x_{e+1}<M_{e}$ that is available to be the cycle size of the $e+1$-th component. As shall be seen shortly there are some additional factors that contribute to the definition of $M_{e}$, thus we momentarily delay giving a definition.

\

\emph{The strategy for $G$:} The idea here is to use the image of the root of each component under a given isomorphism from $B$ to $B'$ to encode the values of $g$. However, since each component depends on a growing number of punctual structures, there cannot be a primitive recursive way to list the indices of the roots of each component. To overcome this, we shall dedicate the components with index $2e$ to encode a stage large enough such that $B$ has closed the $2e+2$-th component, and necessarily also closed the $2e+1$-th component. Roughly speaking, to obtain the indices of $b_{e',0,0}$ for each $e'\leq 2e$, compute the given isomorphism on $b_{0,0,0}$, obtaining the index of $b_{2,0,0}$, and repeating till we obtain the index of $b_{2e,0,0}$. Observe that this process is primitive recursive in the given isomorphism. The elements $b_{2e+1,0,0}$ will be used to encode the value of $g$.

We say that a component is \emph{ready} if the following holds.
\begin{itemize}
    \item If the component is the $2e^{th}$ component and the $2e+2$-th component has closed in the structure $B$ (and thus also $B'$).

    \item If the component is the $2\langle 0,x\rangle+1$-th component and $x$ has entered $W$.

    \item If the component is the $2\langle e+1,x\rangle+1$-th component and $p_{e}(x)$ has converged.
\end{itemize}

Suppose that all components in $B$ and $B'$ are currently closed. For the component with the lowest index, say $e$, that is ready, we perform the \emph{switch} as follows. Enumerate an exact duplicate of the $e^{th}$ component into $B$, containing the elements $d_{e,i,j}$ for each $i,j$ where $b_{e,i,j}$ is currently an element in the $e^{th}$ component. In particular, the collection of all the $b_{e,i,j}$ are indistinguishable from the collection of $d_{e,i,j}$ up to isomorphism. Do the same in $B'$. Note that by convention, each of these $d_{e,i,j}$, and also $d'_{e,i,j}\in B'$ has indices larger than the current stage number. Thus, we would like to `force' any isomorphism from $B\to B'$ to map the elements from the original component in $B$ to the duplicated component in $B'$. To achieve this, we enumerate a \emph{tail} to join the original $e^{th}$ component with the duplicate $e^{th}$ component (see Fig.~\ref{fig:degswitch}).
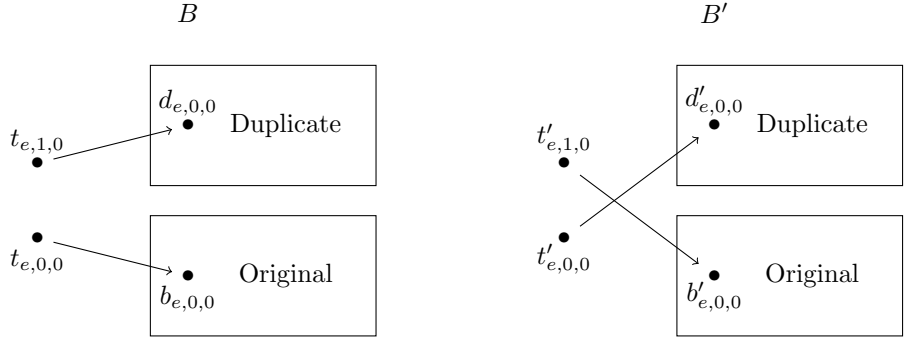
\begin{figure}
    \begin{tikzpicture}
        \node (b0) at (0,0) [label={[shift={(0,-0.8)}]$b_{e,0,0}$}] {$\bullet$};
        \node (d0) at (0,2) [label={[shift={(0,-0.2)}]$d_{e,0,0}$}] {$\bullet$};
        \node (t0) at (-2,0.5) [label={[shift={(0,-0.8)}]$t_{e,0,0}$}] {$\bullet$};
        \node (t1) at (-2,1.5) [label={[shift={(0,-0.2)}]$t_{e,1,0}$}] {$\bullet$};
        \node at (0,3.5) {$B$};
        
        \draw[-,fill=none] (-0.5,-0.8) rectangle (2.5,0.8);
        \node at (1.3,2) {Duplicate};
        \draw[-,fill=none] (-0.5,1.2) rectangle (2.5,2.8);
        \node at (1.3,0) {Original};

        \draw[->] (t0) -- (b0);
        \draw[->] (t1) -- (d0);

        \foreach \x in {7}
        {
        \node at (\x,3.5) {$B'$};
        
        \node (b'0) at (\x,0) [label={[shift={(0,-0.8)}]$b'_{e,0,0}$}] {$\bullet$};
        \node (d'0) at (\x,2) [label={[shift={(0,-0.2)}]$d'_{e,0,0}$}] {$\bullet$};
        \node (t'0) at (\x-2,0.5) [label={[shift={(0,-0.8)}]$t'_{e,0,0}$}] {$\bullet$};
        \node (t'1) at (\x-2,1.5) [label={[shift={(0,-0.2)}]$t'_{e,1,0}$}] {$\bullet$};

        \draw[-,fill=none] (\x-0.5,-0.8) rectangle (\x+2.5,0.8);
        \node at (\x+1.3,2) {Duplicate};
        \draw[-,fill=none] (\x-0.5,1.2) rectangle (\x+2.5,2.8);
        \node at (\x+1.3,0) {Original};
        }

        \draw[->] (t'0) -- (d'0);
        \draw[->] (t'1) -- (b'0);
    \end{tikzpicture}
    \caption{Switching in the $e^{th}$ component. The arrows represent the function $P$.}
    \label{fig:degswitch}
\end{figure}

Since the duplicate and original $e^{th}$ component are identical up to isomorphism, we have the freedom to attach the tail differently in $B$ and $B'$ as shown in the figure while maintaining $B\cong B'$. Evidently, $t_{e,0,0}$ and $t_{e,1,0}$ should be distinguishable up to isomorphism in order to ensure that isomorphisms from $B$ to $B'$ has to map $b_{e,0,0}$ to $d'_{e,0,0}$ (with large index) in $B'$. The tail will consist of elements labelled $t_{e,i,j}$ (in $B$) and the maps $S,P,R,C$ are defined on them as follows.
\begin{itemize}
    \item Let $e'$ be the largest index for which the $e'$-th component currently exists in $B$. Pick a size $0<y_{e}<M_{e'+1}$ different from any cycle size in $B$ thus far which is still available (we shall argue later that such sizes must exist by choosing a sufficiently large $M_{e'+1}$). All cycles of elements in the tail shall have size $y_{e}$.

    \item As shown in the figure define in $B$, $P(t_{e,0,0})=b_{e,0,0}$, $P(t_{e,1,0})=d_{e,0,0}$, and define in $B'$, $P(t'_{e,0,0})=d'_{e,0,0}$, and $P(t'_{e,1,0})=b'_{e,0,0}$. For the rest of the elements in the tail, $P$ is again the identity map. This will be the only place in which $B$ and $B'$ `locally' differ.

    \item $R$ of any element of the tail maps to $t_{e,0,0}$ in $B$ and $t'_{e,0,0}$ in $B'$.

    \item For any $j<y_{e}$, and any $i$, define $S(t_{e,i,j})=t_{e,i+1,0}$, while waiting for the tail to close.
\end{itemize}
Observe that whenever any element $t$ of the tail is enumerated into $B_{n}$, the elements $R(t)$ and $SR(t)$ should also be enumerated `quickly' into $B_{n}$. In addition, the tail also `joins' up in primitive recursive (in $B_{n}$) time to the original and duplicated $e^{th}$ component. In much the same way as before, we apply the pressing strategy to ensure that either $B_{n}\not\cong B$ (declared inactive or pending), or $B_{n}$ enumerates the tail. In addition, by choice of $y_{e}$, $y_{e}$ is strictly smaller than the eventual cycle sizes for $R(b)$ of any $e'$-witness $b$ of $B_{n}$ where $n\leq\lfloor e'/2\rfloor$, and different from any size declared unavailable thus far. Once the copies of $t_{e,0,0}$ has been enumerated in each $B_{n}$ for each $n\leq\lfloor e/2\rfloor$, we close the tail in the same way as before: for the unique $i$ on which $S(t_{e,i,0})$ is yet to be defined, let $S(t_{e,i,j})=t_{e,i,0}$ for each $j<y_{e}$.

We now explain how the various $M_{e}$ may be chosen. Recall that each structure potentially only declares one size to be unavailable, and the original components use only exactly one size each. Similarly, each tail uses also only exactly one size. Thus, by picking $M_{e+1}$ to be larger than the sum of the total number of potential tails, $e+1$, the number of components, $e+1$, and the number of structures, $\lfloor e/2\rfloor+1$, it is guaranteed that there must always be available sizes as required. More specifically, pick $M_{e+1}=4(e+1)>2(e+1)+(\lfloor e/2\rfloor+1)+1$.

\

\emph{The strategy for $Q_{m,n}$:} If $m=n$, then the requirement is trivially satisfied by the identity map. We consider, without loss of generality, the case where $m<n$. The main challenge is in recovering the stage at which both the original and duplicate components are enumerated into both $B_{m}$ and $B_{n}$. Evidently, if we do not wait till such a stage, any attempt at defining an isomorphism may fail; there is no way of knowing if the original component in $B_{m}$ maps to the original or duplicate component in $B_{n}$.

Recall that the amount of time $B$ spends enumerating the $e^{th}$ component depends on the structures $B_{k}$ for each $k\leq\lfloor e/2\rfloor$. Since each of these structures are punctual, and $g$ primitive recursively computes every primitive recursive function ($g(\langle e+1,x\rangle)$ is the stage at which $p_{e}(x)$ converges), then we should be able to recover the stages at which each component in $B$ closes. In much the same vein, we may also recover the stage at which the tail is fully enumerated into $B$ and similarly, each $B_{k}$. (This will be verified formally later.) Thus, we may obtain stages large enough at which $B_{m}$ and $B_{n}$ has completed enumerating the full `local' structure, allowing us to define the isomorphism in the obvious way.

\

\emph{Construction:} At each stage of the construction, there will be exactly one component, or tail of a component which is not closed in $B$ and $B'$; there is exactly one pair $e,i$, such that $S(b_{e,i,0})$ and $S(b'_{e,i,0})$ has yet to be defined. Recall that $M_{e}$ is defined as $4e$ for each $e$ and that an element $b$ of $B_{n}$ is an $e$-witness if for each $0<m\leq M_{e+1}$, $C^{m}R(b)\downarrow\neq R(b)$. We also say that $B_{n}$ is pending if there is some $e$-witness for $B_{n}$.

During stage $s$, let $e$ be the least such that one of the following holds.
\begin{enumerate}[label=(\roman*)]
    \item All components and their tails currently in $B$ are closed, and $b_{e,0,0}$ has yet to be enumerated into $B$.

    \item The $e^{th}$ component is not closed in $B$.

    \item All components and their tails currently in $B$ are closed, and the $e^{th}$ component is ready.

    \item The tail of the $e^{th}$ component is not closed in $B$.
\end{enumerate}

\begin{description}
    \item[Case (i)] If (i) holds, then recursively assume that for each $n\leq\lfloor(e-1)/2\rfloor$, if $B_{n}$ has been declared pending, then it has a $e-1$-witness. Let $0<x_{e}<M_{e+1}$ be the least size which is currently available. Use $x_{e}$ as the cycle size for the $e^{th}$ component and declare it unavailable (it will not be used for any other component or tail). Begin the $e^{th}$ component in both $B$ and $B'$, enumerating $b_{e,0,0}$ and $b'_{e,0,0}$ with cycle sizes $x_{e}$ respectively. Leave $S(b_{e,0,j})$ and $S(b'_{e,0,j})$ undefined for each $j<x_{e}$.

    \item[Case (ii)] If (ii) holds, then let $i$ be the largest index such that $S(b_{e,i,0})$ has yet to be defined. For each $n\leq\lfloor e/2\rfloor$, where $B_{n}$ is not inactive, check if the following holds.
    \begin{itemize}
        \item If $B_{n}$ is pending, then $b$ has a $e$-witness.
    
        \item If $B_{n}$ is not pending, then it has enumerated some element $b$ such that the least $m>0$ for which $C^{m}R(b)\downarrow=R(b)$ is exactly $x_{e}$.
    \end{itemize}
    If the above is true for all $B_{n}$ which are not inactive, then close the $e^{th}$ component by defining $S(b_{e,i,j})=b_{e,i,0}$ in $B$ and $S(b'_{e,i,j})=b'_{e,i,0}$ in $B'$ for each $j<x_{e}$. If the above fails for some $n\leq\lfloor e/2\rfloor$, then continue extending the $e^{th}$ component by enumerating $b_{e,i+1,0}$ with cycle size $x_{e}$ and defining $S(b_{e,i,j})=b_{e,i+1,0}$. Also continue computing $C^{m}R(b)$ for each $b\in B_{n}$ and each $m\leq M_{e+1}$. As discussed previously one of the following must happen.
    \begin{itemize}
        \item $B_{n}$ enumerates the root of the $e^{th}$ component: $B_{n}$ enumerates an element $R(b)$ and the least $m>0$ for which $C^{m}R(b)\downarrow=R(b)$ is exactly $x_{e}$.
        
        \item Some $e$-witness for $B_{n}$ is found, then we may declare $B_{n}$ pending (if it was not previously pending) and proceed to the next required action.
    
        \item $C^{m}R(b)\downarrow=R(b)$ for some $0<m\leq M_{e+1}$ but $m\neq x_{e}$. There can only possibly be finitely many such $b$ found in $B_{n}$ such that $m=x_{e'}$ or $y_{e'}$ for some $e'<e$. After which $B_{n}$ will be declared inactive. The only other possibility is that $B_{n}$ enumerates some $b$ for which the least $0<m\leq M_{e+1}$ where $C^{m}R(b)\downarrow=R(b)$ is different from all $x_{e'}$ and $y_{e'}$ for $e'<e$. If such a $b$ is found, then declare $m$ to be unavailable, and $B_{n}$ to be inactive.
    \end{itemize}
    That is, we eventually close the $e^{th}$ component, and when we do, all currently pending $B_{n}$ has a $e$-witness.
    
    \item[Case (iii)] If (iii) holds, then let $e'$ be the largest such that $b_{e',0,0}$ has been enumerated into $B$. Pick $y_{e}<M_{e'+1}$ to be the least size that is available and subsequently declare it unavailable (this ensures that $y_{e}$ is different from all earlier cycles and all future cycles in different components or tails). Perform the switch for the $e^{th}$ component by enumerating $t_{e,0,0},t_{e,1,0}$, the duplicate $e^{th}$ component, and their counterparts into $B$ and $B'$ respectively. Let the cycle size of $t_{e,0,0}$ and $t_{e,1,0}$ be $y_{e}$. For each $j<y_{e}$, define $S(t_{e,0,j})=t_{e,1,0}$ and leave $S(t_{e,1,j})$. As discussed in the strategy for $G$, define for each $i<2$, $P(t_{e,i,0})=d_{e,i,0}$ in $B$, and $P(t'_{e,i,0})=d'_{e,1-i,0}$ in $B'$.

    \item[Case (iv)] If (iv) holds, then let $i$ be the largest such that $S(t_{e,i,0})$ is currently undefined. Just as in Case (ii), we only close the tail when for each $n\leq\lfloor e/2\rfloor$ where $B_{n}$ is not inactive, the following holds.
    \begin{itemize}
        \item If $B_{n}$ is pending, then $b$ has a $e'$-witness where $e'$ is currently the largest for which $b_{e',0,0}$ exists in $B$.

        \item $B_{n}$ has enumerated some element $b$ such that the least $m>0$ for which $C^{m}R(b)\downarrow=R(b)$ is exactly $y_{e}$.
    \end{itemize}
    For the same reasons as before, we eventually close the $e^{th}$ tail, and when we do, all currently pending $B_{n}$ has a $e'$-witness.
\end{description}

\

\emph{Verification:} We first show that $B$ and $B'$ are punctual structures. The functions $P,R$ and $C$ are all defined immediately whenever some element is enumerated into $B$ or $B'$. The only function which is delayed is $S$ when we do not want to close the $e^{th}$ component or tail yet. However, the delay lasts only for a single stage as seen in Cases (ii) and (iv) of the construction: for the elements $b_{e,i,j}$ or $t_{e,i,j}$ for which $S$ is undefined at the end of stage $s$, the $e^{th}$ component or tail either closes at stage $s+1$, or a new element $b_{e,i+1,0}$ or $t_{e,i+1,0}$ is enumerated to serve as the $S$-image. Thus, $B$ and $B'$ are punctual.

Now we show that if $B_{n}$ is ever declared inactive or pending, then $B_{n}\not\cong B$. If $B_{n}$ is declared inactive at some stage, then one of the following must hold.
\begin{itemize}
    \item There are distinct $b_{0},b_{1},\dots,b_{N}$ and some $x_{e}$ such that for each $i\leq N$, the least $m>0$ for which $C^{m}R(b_{i})=R(b_{i})$ is exactly $x_{e}$ and $N$ is strictly larger than the total number of elements in the closed $e^{th}$ component in $B$. Since each component has a distinct cycle size, and also different from all potential and existing tails, $B_{n}$ cannot possibly be isomorphic to $B$.

    \item There is some element $b\in B_{n}$ such that the least $m>0$ for which $C^{m}R(b)\downarrow=R(b)$ is different from the size of any cycle currently in $B$. From Case (ii) of the construction, if such a $b$ is found, we must have defined $m$ to be unavailable, and thus never enumerate any cycle of the same size into $B$.
\end{itemize}
In other words, if $B_{n}$ is ever declared inactive, then it must be that $B_{n}\not\cong B$. We may thus suppose that $B_{n}$ is never declared inactive and hence stays pending forever. From the construction, a structure $B_{n}$ is first declared pending if for some $e$, an $e$-witness for $B_{n}$ is found. The claim now is that this $e$-witness $b$ for $B_{n}$ eventually becomes an $e'$-witness for all $e'$.

Inductively assume that $b$ is an $e'$-witness. That is, for each $0<m\leq M_{e'+1}$, $C^{m}R(b)\downarrow\neq R(b)$. From Case (ii) of the construction, the $e'+1$-th component is only closed after any pending structure has obtained an $e'+1$-witness. It remains to show that $b$ must become an $e'+1$-witness. Since $x_{e'+1}<M_{e'+1}$ and $b$ is an $e'$-witness, then the cycle size of $R(b)$ (in $B_{n}$) cannot possibly mimic that of any cycle in the $e'+1$-th component of $B$. If there is some $0<m\leq M_{e'+2}$ for which $C^{m}R(b)\downarrow= R(b)$, then the least $m>0$ where this hold is necessarily larger than $M_{e'+1}$ and thus bigger than any cycle size shown in $B$ thus far. In particular, $m$ would then be declared as unavailable and $B_{n}$ would become inactive. That is, it must be that $C^{m}R(b)\downarrow\neq R(b)$ for each $0<m\leq M_{e'+2}$, and thus, $b$ must also be an $e'+1$-witness. Therefore, $B_{n}$ cannot possibly be isomorphic to $B$ as $B_{n}$ contains an infinite component.

\

\emph{$Q_{m,n}$ is satisfied:} If $m=n$, then the identity map is obviously a punctual isomorphism. Without loss of generality, we may assume that $m<n$. By the premise of the requirement, $B_{m}\cong B_{n}\cong B$, and thus, it suffices to produce an isomorphism on each component. In particular, we need only compute a stage at which $B_{m}$ and $B_{n}$ has fully enumerated both the original and duplicate components together with the tail.

We claim that given $g$ and $e$, it is primitive recursive to find the stage at which the $e^{th}$ component closes in $B$ and $B'$, and thus the total number of elements within the $e^{th}$ component. Recall that the $e^{th}$ component in $B$ and $B'$ closes exactly when there is some $b\in B_{k}$ for each $k\leq\lfloor e/2\rfloor$ such that one of the following holds (see Case (ii) of the construction).
\begin{enumerate}[label=(\roman*)]
    \item The least $0<l\leq M_{e+1}$ for which $C^{l}R(b)\downarrow=R(b)$ is exactly $x_{e}$ (the cycle size of the $e^{th}$ component).

    \item The least $0<l\leq M_{e+1}$ for which $C^{l}R(b)\downarrow\neq R(b)$ is different from $x_{e'}$ and $y_{e'}$ if they exist, for each $e'\leq e$. ($B_{n}$ will then be declared inactive.)

    \item For all $0<l\leq M_{e+1}$, $C^{l}R(b)\downarrow\neq R(b)$. ($b$ is a $e$-witness for $B_{n}$.)
\end{enumerate}
Since $g$ primitive recursively computes all primitive recursive functions, and $B_{k}$ is punctual, we may assume that we have access to the versions of $S,P,R$ and $C$ in each $B_{k}$. In particular, using $g$, we apply the following procedure.
\begin{itemize}
    \item First compute the stage at which $B$ and $B'$ closes the $0^{th}$ component, or equivalently, the stage at which some $b\in B_{0}$ is found satisfying one of (i), (ii) or (iii) for $e=0$. This is clearly primitive recursive in $g$, as we may use $g$ to compute the actions of the versions of the maps $R,C$ of $B_{0}$, and we need only check if $CR(b)\downarrow=R(b)$ for the first $b$ enumerated into $B_{0}$. In fact, $g$ provides the stage at which $CR(b)\downarrow$, and at such a stage, we must have closed the $0^{th}$ component. In particular, the number of elements in the $0^{th}$ component is primitive recursive in $g$ and $0$.

    \item Suppose recursively that the number of elements in the original $e'$-th components of $B$ and $B'$ for all $e'<e$ may be computed primitive recursively in $g$ and $e'$. Since each $B_{k}$ is punctual, we may compute using $g$, the settling time of each of the functions of $B_{k}$ on the first $N$ many elements of $B_{k}$, where $N$ is the total number of elements in the first $e$ many original components of $B$ and $B'$. After the delay given by this settling time, it is evident that $B_{k}$ must have enumerated some element $b$ that is not part of any of the $e'$-th component for any $e'<e$. By further computing the settling time (using $g$) for $C^{m+1}R(b)$ on such a $b$, we may then obtain the stage at which some $b\in B_{k}$ is discovered to satisfy one of (i), (ii), or (iii). Repeating this for each $k\leq\lfloor e/2\rfloor$ allows us to then obtain the stage at which the $e^{th}$ component is closed in $B$ and $B'$. Thus, such a stage can be computed primitive recursively in $g$ and $e$.
\end{itemize}
By applying a similar argument, we may further obtain primitive recursively, using $g$ and $e$, a stage at which the copy of the $e^{th}$ component in $B_{k}$ is closed or that $B_{k}$ has been declared inactive or pending for each $k\leq\lfloor e/2\rfloor$. Similarly, it is primitive recursive in $g$ and the stage at which $t_{e,0,0}$, say $s_{e}$, is enumerated into $B$ to compute the stage at which the $e^{th}$ tail is closed in $B$.

We now show that $s_{e}$ may be obtained primitive recursively using $g$ and $e$. If $e$ is even, then recall that the $e^{th}$ component becomes ready at the stage at which the $e+2$-th component closes. Applying the fact above allows us to conclude that this can be computed primitive recursively using $g$ and $e$. If $e$ is odd, then the $e^{th}$ component becomes ready at the stage given by $g((e-1)/2)$. Once the stage at which a component becomes ready is obtained, we need only further compute the stages at which any component of smaller index completes switching, or equivalently, the stages at which the tails of smaller index closes. This is clearly primitive recursive in $g$ and the stage at which the $e^{th}$ component becomes ready. In conclusion, given $g$ and $e$, we may primitive recursively compute a stage large enough such that $B_{k}$ for any $k\leq\lfloor e/2\rfloor$ has enumerated every element of the original and duplicate $e^{th}$ components, and also the $e^{th}$ tail. Applying this for every $e\geq 2n+2$ allows us to conclude that $g$ may compute an isomorphism between $B_{m}$ and $B_{n}$ for each component with index $e\geq 2n+2$. Since each component and tail is finite, we may non-uniformly fix the isomorphism between $B_{m}$ and $B_{n}$ for the components with index $<2n+2$. Thus, $g\geq_{pr}h\oplus h^{-1}$ for some $h:B_{m}\to B_{n}$ an isomorphism.

\

\emph{Computing $g(x)$:} Let $f:B\to B'$ be an isomorphism. Given $x$, compute $g(x)$ as follows. For each $e\leq x$ Compute $f(b_{2e,0,0})$. Since the tail is rigid, $f$ must map $t_{2e,0,0}$ in $B$ to $t'_{2e,0,0}$ in $B'$. In particular, $f(b_{2e,0,0})=fP(t_{2e,0,0})=Pf(t_{2e,0,0})=P(t'_{2e,0,0})=d'_{2e,0,0}$. From Case (iii) of the construction, $d'_{2e,0,0}$ is enumerated into $B'$ only at a stage after the $2e^{th}$ component is ready; this is a stage at least as large as the stage where the $2e+2$-th component is closed in $B$. Thus, $f(b_{2e,0,0})$ provides a stage larger than the index of $b_{2e+2,0,0}$ and necessarily also $b_{2e+1,0,0}$. Taking $e=x$, we may thus obtain the index of $b_{2x+1,0,0}$ primitive recursively in $f$. For similar reasons as before, $f(b_{2x+1,0,0})=fP(t_{2x+1,0,0})=Pf(t_{2x+1,0,0})=P(t'_{2x+1,0,0})=d'_{2x+1,0,0}$ which must have index larger than the value of $g(x)$. By enumerating $W$ or computing the appropriate primitive recursive function up to the index of $d'_{2x+1,0,0}$ allows us to recover the exact value of $g(x)$. Theorem~\ref{thm:deg} is proved.
\end{proof}

\bibliographystyle{alpha}
\bibliography{mybib}

\end{document}